\title{\bf 
Self-similarity of the classical $\bm{p}$-adic
\\
Lie groups and Lie algebras 
}
\author{
        Karina Livramento
        \\[0.1cm]
        Francesco Noseda  
}
\date{}
\newcommand{\bb}[1]{\mathbb{#1}}
\newcommand{\mr}[1]{\mathrm{#1}}
\newcommand{\rar}{\rightarrow}
\newcommand{\map}{\longrightarrow}
\newcommand{\vep}{\varepsilon}
\newcommand{\les}{\leqslant}
\newcommand{\ges}{\geqslant}
\newcommand{\ep}{\hfill $\square$} 
\newtheorem{lem} {Lemma} [section]
\newtheorem{proposition} [lem] {Proposition}
\newtheorem{remark}[lem]{Remark}
\newenvironment{proof}{{\sc Proof:}}{
\hfill $\square$}
\newenvironment{proof2}{{\sc Proof:}}{}
\numberwithin{equation}{section}
\newtheorem{theoremx}{Theorem}
\newtheorem{problemx}[theoremx]{Problem}
\newtheorem{corollaryx}[theoremx]{Corollary}
\newcommand{\Z}{\mathbb{Z}}
\newcommand{\Q}{\mathbb{Q}}
\newcommand{\F}{\mathbb{F}}
\DeclareMathOperator{\Id}{Id}
\DeclareMathOperator{\tr}{tr}
\DeclareMathOperator{\diag}{diag}
\DeclareMathOperator{\sll}{sl}
\DeclareMathOperator{\spt}{sp}
\let\so\undefined
\DeclareMathOperator{\so}{so}
\DeclareMathOperator{\gl}{gl}
\DeclareMathOperator{\GL}{GL}
\DeclareMathOperator{\SL}{SL}
\DeclareMathOperator{\Sp}{Sp}
\DeclareMathOperator{\SO}{SO}
\renewcommand{\leq}{\leqslant}
\renewcommand{\geq}{\geqslant}
\begin{document} 

\maketitle

\begin{abstract}
We exhibit infinite lists of ramification indices $\delta$ for which
the classical Lie groups over the ring of integers of $p$-adic fields
admit a faithful self-similar action on a regular rooted $\delta$-ary tree
in such a way that the action is transitive on the first level.
These results follow from the study of virtual endomorphisms of the classical 
Lie lattices over the same type of rings.
In order to compute the ramification indices for all the types of groups treated in the paper, 
we compute the indices of principal congruence subgroups of the orthogonal groups
for a class of local rings. 
\end{abstract}

{
\let\thefootnote\relax\footnotetext{\textit{Mathematics Subject Classification (2020):}
primary 20E08, 20E18, 11E57; secondary 11E95, 17B20, 22E20, 22E60.}
\let\thefootnote\relax\footnotetext{\textit{Key words:} self-similar group,
$p$-adic analytic group, pro-$p$ group, classical Lie group, classical Lie algebra, $p$-adic Lie lattice,
index of congruence subgroup of orthogonal group.}
\let\thefootnote\relax\footnotetext{
{The first author was supported by the Fundação Carlos Chagas Filho de Amparo à Pesquisa do
Estado do Rio de Janeiro (FAPERJ) - Processo nº SEI-260003/013443/2023, and by 
the Coordenação de Aperfeiçoamento de Pessoal de Nível Superior - Brasil (CAPES) - Finance Code  001.}}
}


\section{Introduction}

Groups that admit a faithful self-similar action on some regular 
rooted tree form an interesting class that contains many important examples 
such as the Grigorchuk 2-group \cite{Gri80} and the Gupta-Sidki $p$-groups \cite{GuSi83}.
More recently, there has been an intensive study on the self-similar actions of other 
families of groups, 
including $p$-adic analytic pro-$p$ groups 
\cite{NS2019, NSGGD22, NS2022JGT, NSnorm1ArxivV2, WZ2023arxv1},
which we will focus on in this paper.
For references on the study of the self-similarity of other classes of groups
the reader may look at the references given in, for instance, \cite{NSGGD22}.

We say that a group $G$ is self-similar of index ${\delta}$,
where $\delta\ges 1$ is an integer,
if $G$ admits a faithful self-similar action on
a regular rooted $\delta$-ary tree in such a way that the action is transitive on the first level.
We say that $G$ is self-similar if it is self-similar of 
some index $\delta$; see, for instance, \cite{NekSSgrp} for a general treatment of self-similar actions. 
It is known that a group is self-similar of index $\delta$
if and only if it admits a simple virtual endomorphism of index 
$\delta$.
A {virtual endomorphism} of $G$ of index $\delta$
is a group homomorphism $\varphi:D\rar G$, where $D\les G$
is a subgroup of index $\delta$; the virtual endomorphism $\varphi$ is said to be simple
if there are no nontrivial normal subgroups $N$ of $G$ that are $\varphi$-invariant,
that is, such that $N\subseteq D$ and $\varphi(N)\subseteq N$.

In this paper we deal with the classical $p$-adic analytic Lie groups 
$\SL_n(K)$, $\Sp_n(K)$, and $\SO_n(K)$, 
where $K$ is a $p$-adic field.
We introduce the precise context in order to state the main results.
Let $n\ges 2$ be an integer, $p$ be a prime, and $K$ be a finite field extension of $\bb{Q}_p$ 
of degree $d$, ramification index $e$, and inertia degree $f$.
Let $R$ be the ring of integers of $K$, and let $\pi\in R$ be a uniformizing parameter.
We denote by $q=p^f$ the cardinality of the residue field of $R$.
For $\Sigma$ one of the symbols $\SL$, $\Sp$, and $\SO$,
and for any integer $m\ges 1$, the $m$-th principal congruence subgroup $\Sigma_n^m(R)$
of $\Sigma_n(R)$ is defined to be the kernel of the map
$\Sigma_n(R)\rar \Sigma_n(R/\pi^mR)$ of reduction modulo $\pi^m$. 
Let $l$ be the rank of $\Sigma_n(K)$, that is,
the dimension over $K$ of any Cartan subalgebra of the associated $K$-Lie algebra.
We recall that the relation between $n$ and $l$ is as follows:
$$
\begin{array}{ll}
n = l + 1 & \mbox{ for }\SL_n,
\\
n = 2l  & \mbox{ for }\Sp_n\mbox{ and }n\mbox{ even},
\\
n = 2l  & \mbox{ for }\SO_n\mbox{ and }n\mbox{ even}, 
\\
n = 2l+1  & \mbox{ for }\SO_n\mbox{ and }n\mbox{ odd}. 
\end{array}
$$ 
We also observe that $l\ges 1$.

\bigskip 
The following theorem is the main result of the paper,
and it gives lists of self-similarity indices
for the classical groups.

\begin{theoremx}
\label{tssclgpK2}
In the above context,
let $k\ges 1$ and $m\ges e$ be integers.
Then the following holds.
\begin{enumerate}
\item 
Assume that 
$(l^2+2l)d\les p$.
Then 
the compact $p$-adic analytic group
$\SL_{l+1}(R)$ is self-similar of index
$$
q^{lk+(l^2+2l)m}\prod_{i=1}^{l}
\left(1- \frac{1}{q^{i+1}}\right).
$$

\item 
Assume that 
$(2l^2+l)d\les p$.
Then 
the compact $p$-adic analytic group
$\Sp_{2l}(R)$ is self-similar of index
$$
q^{2lk+(2l^2+l)m}\prod_{i=1}^l
\left(1- \frac{1}{q^{2i}}\right).
$$

\item 
Assume that 
$l\ges 2$ and 
$(2l^2-l)d\les p$.
Then 
the compact $p$-adic analytic group
$\SO_{2l}(R)$ is self-similar of index
$$
q^{(2l-2)k+(2l^2-l)m}
\left(1- \frac{1}{q^{l}}\right)
\prod_{i=1}^{l-1}
\left(1- \frac{1}{q^{2i}}\right).
$$

\item 
Assume that 
$(2l^2+l)d\les p$.
Then 
the compact $p$-adic analytic group
$\SO_{2l+1}(R)$ is self-similar of index
$$
q^{(2l-1)k+(2l^2+l)m}\prod_{i=1}^l
\left(1- \frac{1}{q^{2i}}\right).
$$
\end{enumerate}
\end{theoremx}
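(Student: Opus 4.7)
The plan is to reduce each of the four cases of Theorem~\ref{tssclgpK2} to a statement about the corresponding classical Lie lattice and then invoke the Lazard-style Lie correspondence. Three ingredients, all developed earlier in the paper, do the work: (i) a simple virtual endomorphism $\psi\colon M\to L$ of the $R$-Lie lattice $L = \sll_n(R)$, $\spt_n(R)$, or $\so_n(R)$ of the appropriate type, with $[L:M]$ equal to $q^{lk}$, $q^{2lk}$, $q^{(2l-2)k}$, or $q^{(2l-1)k}$ in the four cases respectively; (ii) the Lie correspondence, applicable under $(\dim_R L)\cdot d\les p$ and $m\ges e$, which identifies sub-$R$-Lie-lattices of $\pi^m L$ with open subgroups of the uniform pro-$p$ group $\Sigma_n^m(R)$ whose associated $\Z_p$-Lie algebra is $\pi^m L$; and (iii) the order formula for $\Sigma_n(R/\pi^m R)$, which in the orthogonal case rests on the indices of principal congruence subgroups of orthogonal groups computed earlier in the paper.

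I fix one of the four types and take $\psi\colon M\to L$ from~(i). I pass to the $\pi^m$-shift $\psi'\colon \pi^m M\to \pi^m L$, $\psi'(x)=\pi^m\psi(\pi^{-m}x)$, which is an $R$-Lie homomorphism of the same index $[L:M]$ and inherits simplicity in the sense that no nonzero $\psi'$-invariant ideal of $\pi^m L$ lies in $\pi^m M$. Applying~(ii), the exponential converts $\psi'$ into a continuous group homomorphism $\varphi\colon D\to G^m$, where $G^m:=\Sigma_n^m(R)$ and $D\les G^m$ is the open subgroup corresponding to $\pi^m M$, with $[G^m:D]=[L:M]$.

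Viewed as a virtual endomorphism $\varphi\colon D\to\Sigma_n(R)$ of the full group, its index is
$$
[\Sigma_n(R):D]=[\Sigma_n(R):G^m]\cdot[G^m:D]=|\Sigma_n(R/\pi^m R)|\cdot[L:M].
$$
By~(iii) the first factor equals $q^{(\dim\Sigma_n)m}$ times the finite-field product of the form $\prod(1-q^{-(i+1)})$, $\prod(1-q^{-2i})$, or $(1-q^{-l})\prod(1-q^{-2i})$ according to type; combining with the factor $[L:M]$ from~(i) reproduces verbatim the index claimed in each of items~(1)--(4) of the theorem.

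The principal obstacle is upgrading simplicity of $\varphi$ from the pro-$p$ group $G^m$ to the full group $\Sigma_n(R)$. A nontrivial closed normal subgroup $N\normal\Sigma_n(R)$ with $N\les D$ and $\varphi(N)\les N$ must, by the classical description of closed normal subgroups of the $p$-adic analytic groups $\SL_n(K)$, $\Sp_n(K)$, $\SO_n(K)$, either be contained in the centre or contain a principal congruence subgroup $\Sigma_n^{m'}(R)$. The first possibility is ruled out because $D\les G^m$ lies in the first congruence subgroup and $\psi'$ acts nontrivially on directions transverse to the centre, while the second translates via the Lie correspondence into a nonzero $\psi'$-invariant ideal of $\pi^m L$ contained in $\pi^m M$, contradicting the simplicity from~(i). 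The $\SO_{2l}$ case, where the centre is larger and the root system is not self-dual, will require the most care in balancing these two exclusions.
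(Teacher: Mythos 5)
Your proposal follows essentially the same route as the paper: pass to the Lie lattice, produce a simple virtual endomorphism of index $q^{F(n)k}$ (Theorem \ref{tsscllieK}), transfer it to the uniform group $\Sigma_n^m(R)$ via the Lazard correspondence under $\dim G=(\dim_R L)d\les p$ (Theorem \ref{tssclgpK}), and then multiply by $[\Sigma_n(R):\Sigma_n^m(R)]$, which for $\SL$ and $\Sp$ comes from Newman and for $\SO$ from Theorem \ref{tindSO}. The one place you diverge is the step you call the ``principal obstacle'': upgrading simplicity from $G^m=\Sigma_n^m(R)$ to $\Sigma_n(R)$. The paper disposes of this by citing \cite[Corollary 1.4]{NS2019}, and the underlying observation is a one-liner: if $N\normal\Sigma_n(R)$ is nontrivial with $N\subseteq D$ and $\varphi(N)\subseteq N$, then since $N\subseteq D\subseteq G^m$ the subgroup $N$ is a fortiori a nontrivial $\varphi$-invariant normal subgroup of $G^m$, contradicting simplicity of $\varphi$ as a virtual endomorphism of $G^m$ (normality in the larger group is a \emph{stronger} condition, so nothing new can appear). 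Your proposed detour through the classification of closed normal subgroups of the classical groups is therefore unnecessary, and as written it is also the least solid part of the argument: the dichotomy ``central or contains a congruence subgroup'' and the exclusion of the central case are only sketched, and you yourself flag $\SO_{2l}$ as delicate. Replace that paragraph with the trivial observation above and the proof is complete. Two small points of hygiene: for $\SO$ the quantity you need is $[\SO_n(R):\SO_n^m(R)]$, i.e.\ the order of the \emph{image} of the reduction map, which is exactly what Theorem \ref{tindSO} computes --- you should not assert it equals $|\SO_n(R/\pi^mR)|$ without proving surjectivity of reduction; and the hypothesis $\dim G\les p$ forces $p\ges 3$, so $\theta=1$ and $m\ges e$ indeed suffices for $\pi^mL$ to be powerful and for $\exp$ to be defined.
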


Theorem \ref{tssclgpK2} is a consequence of the results
given below, 
which are interesting on their own.
More precisely, the logical flow is as follows.
One key ingredient of our approach is to linearize the problem of self-similarity by considering
the Lie algebras associated with the groups involved, as explained in the final part of 
the Introduction.
By working in the context of Lie algebras, it is possible to prove Theorem \ref{tsscllieK}
that, together with \cite[Proposition A]{NS2019}, 
implies Theorem \ref{tssclgpK}.
We observe that Theorem \ref{tssclgpK} gives self-similarity indices 
for principal congruence subgroups of the classical groups.
Next, Theorem \ref{tssclgpK} and \cite[Corollary 1.4]{NS2019} imply 
Corollary \ref{cssclgpK}.
We note that, in contrast with Theorem \ref{tssclgpK2}, 
we state Theorem \ref{tssclgpK} and Corollary \ref{cssclgpK} in terms of $n$ instead of $l$,
which has the pleasant feature of unifying the formulas for $\SO$
(the same is true for Theorem \ref{tsscllieK} and Corollary \ref{csscllieK}, the latter also about Lie algebras).
In order to deduce Theorem \ref{tssclgpK2} from Corollary \ref{cssclgpK},
one has to compute the indices, in the group theoretic sense, 
of the principal congruence subgroups of the classical groups.
For $\SL$ and $\Sp$ we rely on \cite{NewIM}, while for $\SO$
we rely on Theorem \ref{tindSO}, which is proved in Section \ref{secindSO}.
More details of
the proofs of Theorem \ref{tssclgpK} and Theorem \ref{tssclgpK2} are given in Section \ref{secpfA}.
Finally, 
in Corollary \ref{cminindex} and Problem \ref{pminindex}
we briefly deal with the question of finding the minimum integers $\delta$ such that the various classical groups 
and their principal congruence subgroups are self-similar of index $\delta$.

\begin{theoremx}
\label{tssclgpK}
In the context above Theorem \ref{tssclgpK2}, 
let $k\ges 1$ and $m\ges e$ be integers.
Then the following holds.
\begin{enumerate}
\item 
Assume that $(n^2-1)d\les p$.
Then the uniform pro-$p$ group $\SL_n^m(R)$ is self-similar of index $q^{(n-1)k}$.

\item 
Assume that $n$ is even and $(n^2+n)d\leq 2p$. 
Then the uniform pro-$p$ group 
$\Sp_{n}^m(R)$ is self-similar of index $q^{nk}$,

\item 
Assume that $n\ges 3$ and $(n^2-n)d\leq 2p$.
Then the uniform pro-$p$ group 
$\SO_n^m(R)$ is self-similar of index $q^{(n-2)k}$.
\end{enumerate}
\end{theoremx}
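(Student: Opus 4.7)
The plan is to reduce Theorem~\ref{tssclgpK} entirely to its Lie-algebra counterpart Theorem~\ref{tsscllieK} via the bridge supplied by \cite[Proposition A]{NS2019}. The arithmetic hypotheses $(n^2-1)d\les p$, $(n^2+n)d\les 2p$, and $(n^2-n)d\les 2p$ are precisely the statements that the $\Q_p$-dimension of $\sll_n(K)$, $\spt_n(K)$, and $\so_n(K)$ respectively is at most $p$. Combined with $m\ges e$ (so that $\pi^m\in p R$), this forces the principal congruence subgroup $\Sigma_n^m(R)$ to be a uniform pro-$p$ group whose associated $\Z_p$-Lie lattice (under the Baker–Campbell–Hausdorff / log correspondence) is canonically identified with $\pi^m\Sigma_n(R)$, the $m$-th congruence ideal of the classical Lie lattice.

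First I would make the identification of $\Sigma_n^m(R)$ as a uniform pro-$p$ group precise, noting that the hypothesis on $d$ guarantees that the relevant nilpotent approximation to BCH terminates after a small number of terms on elements of $\pi^m\Sigma_n(R)$, so that $\log$ and $\exp$ are mutually inverse bijections between the group and the lattice that preserve subgroups/sublattices of finite index as well as normality. Then Theorem~\ref{tsscllieK} produces, for each of the three families, a simple virtual endomorphism of the Lie lattice $\pi^m\Sigma_n(R)$ with index equal to $q^{(n-1)k}$, $q^{nk}$, or $q^{(n-2)k}$ according to type. I would then quote \cite[Proposition A]{NS2019}, which for uniform pro-$p$ groups satisfying the Lazard-type bound upgrades a simple virtual endomorphism of the Lie lattice to a simple virtual endomorphism of the group of the same index; the ``simple'' condition transports because $\varphi$-invariant closed normal subgroups of $\Sigma_n^m(R)$ correspond to $\varphi$-invariant Lie ideals of $\pi^m\Sigma_n(R)$.

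The main obstacle is making sure the hypotheses of \cite[Proposition A]{NS2019} are actually met in our setting. This requires checking (i) that $\Sigma_n^m(R)$ is uniform pro-$p$ of the correct dimension, (ii) that the associated $\Z_p$-Lie lattice is the one used in Theorem~\ref{tsscllieK}, and (iii) that the bound $(\dim_{\Q_p}\mathfrak{g})\les p$ suffices for the log/exp translation of virtual endomorphisms to preserve index and simplicity. Each of these is a standard consequence of Lazard-type arguments once the arithmetic hypotheses on $d$ and $m$ are in force, but the bookkeeping—particularly verifying that the isomorphism between the lattice of subgroups and the lattice of sublattices is strict enough to preserve the index $\delta$—is the delicate part; I would expect to defer the details to the discussion in Section~\ref{secpfA}.
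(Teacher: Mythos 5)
Your proposal is correct and follows essentially the same route as the paper: identify the arithmetic hypotheses with the Lazard bound $\dim_{\Q_p}\Sigma_n(K)\les p$, note that this forces $p\ges 3$ so that $m\ges e$ already makes $\pi^m\sigma_n(R)$ powerful with $\exp(\pi^m\sigma_n(R))=\Sigma_n^m(R)$ uniform, and then transfer the simple virtual endomorphisms of Theorem~\ref{tsscllieK} through \cite[Proposition A]{NS2019}. The only cosmetic slip is writing $\pi^m\Sigma_n(R)$ for the Lie lattice $\pi^m\sigma_n(R)$; the substance matches Section~\ref{secpfA}.
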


\begin{corollaryx}
\label{cssclgpK}
In the context above Theorem \ref{tssclgpK2}
the following holds.
\begin{enumerate}
\item 
All the open subgroups of 
$\SL_n(R)$, $\Sp_n(R)$, and $\SO_n(R)$
are self-similar,
where $n$ satisfies the assumptions of the respective item of Theorem \ref{tssclgpK}.

\item 
Let $\Sigma$ be any of the symbols $\SL$, $\Sp$, and $\SO$. 
Assume that $n$, $m$, and $k$ are integers satisfying the constraints 
of Theorem \ref{tssclgpK},
including the ones of the respective item.
Let 
$\Sigma_n^m(R)\les G\les \Sigma_n(R)$.
Then $G$ is self-similar of index $[G:\Sigma_n^m(R)]q^{F(n)k}$,
where
$$
F(n) = 
\left\{
\begin{array}{ll}
n-1 &\mbox{ if }\,\Sigma = \SL, \\
n &\mbox{ if }\,\Sigma = \Sp, \\
n-2 &\mbox{ if }\,\Sigma = \SO.
\end{array}
\right. 
$$
\end{enumerate}
\end{corollaryx}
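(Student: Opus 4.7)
The plan is to derive Corollary \ref{cssclgpK} as a straightforward consequence of Theorem \ref{tssclgpK} together with \cite[Corollary 1.4]{NS2019}. The latter is the general fact that if $H$ is a self-similar subgroup of a group $G$ of finite index $s$, with self-similarity index $\delta$, then $G$ itself is self-similar of index $s\delta$. With these two tools in hand, the corollary reduces to fitting the open subgroups of the classical groups into a sandwich involving some principal congruence subgroup.

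For part (2), I would first invoke Theorem \ref{tssclgpK} under the given numerical hypotheses to conclude that the uniform pro-$p$ group $\Sigma_n^m(R)$ is self-similar of index $q^{F(n)k}$, where $F(n)$ is as defined in the statement of the corollary. Since $\Sigma_n^m(R)$ is open, hence of finite index, in $\Sigma_n(R)$, the assumption $\Sigma_n^m(R)\les G\les \Sigma_n(R)$ gives $[G:\Sigma_n^m(R)]<\infty$. Applying \cite[Corollary 1.4]{NS2019} to the pair $\Sigma_n^m(R)\les G$ then yields that $G$ is self-similar of index $[G:\Sigma_n^m(R)]\,q^{F(n)k}$, as asserted.

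For part (1), given an open subgroup $H\les \Sigma_n(R)$, I use the fact that the principal congruence subgroups $\{\Sigma_n^m(R)\}_{m\ges 1}$ form a neighborhood basis of the identity in $\Sigma_n(R)$. Hence one may choose $m$ sufficiently large, and in particular with $m\ges e$, so that $\Sigma_n^m(R)\les H$. Fixing any $k\ges 1$, applying part (2) with $G$ replaced by $H$ shows that $H$ is self-similar (of index $[H:\Sigma_n^m(R)]\,q^{F(n)k}$), which is all that part (1) requires.

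There is no substantive obstacle here: the argument is essentially formal once Theorem \ref{tssclgpK} and \cite[Corollary 1.4]{NS2019} are in place. The only bookkeeping is to confirm that the numerical constraints on $n$, $d$, and $p$ appearing in Theorem \ref{tssclgpK} are precisely the standing assumptions of the corollary, so that no additional condition intrudes on passage from the theorem to the corollary.
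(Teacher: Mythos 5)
Your proposal is correct and follows essentially the same route as the paper, which derives Corollary \ref{cssclgpK} exactly by combining Theorem \ref{tssclgpK} with \cite[Corollary 1.4]{NS2019} (the finite-index overgroup principle), with part (1) reduced to part (2) via the fact that the principal congruence subgroups form a neighborhood basis of the identity. Nothing is missing.
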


\begin{theoremx} 
\label{tindSO}
Let $R$ be a local associative commutative ring with unit.
Let $P$ be the maximal ideal of $R$,
and assume that $P$ is generated by $\pi\in R$, where $\pi$ 
is not a zero divisor in $R$.
Assume that the residue field of $R$ is finite and of characteristic different from 2,
and let $q=|R/P|$. Let $l$ and $m$ be integers, with $l\ges 0$ and $m\ges 1$. 
Then 
\begin{eqnarray*}
[\mr{O}_{2l+1}(R) : \mr{O}_{2l+1}^m(R)] 
&=& 
2\cdot [\mr{SO}_{2l+1}(R) : \mr{SO}_{2l+1}^m(R)],
\\[3mm]
[\mr{SO}_{2l+1}(R) : \mr{SO}_{2l+1}^m(R)]
&=& 
q^{(2l^2+l)m}\prod_{i=1}^{l}\left(1 - \frac{1}{q^{2i}}\right),
\end{eqnarray*}
and, for $l\ges 1$,
\begin{eqnarray*}
[\mr{O}_{2l}(R) : \mr{O}_{2l}^m(R)] 
&=& 
2\cdot [\mr{SO}_{2l}(R) : \mr{SO}_{2l}^m(R)], 
\\[3mm]
[\mr{SO}_{2l}(R) : \mr{SO}_{2l}^m(R)] 
&=& 
q^{(2l^2-l)m}\left(1 - \frac{1}{q^l}\right)\prod_{i=1}^{l-1}\left(1 - \frac{1}{q^{2i}}\right),
\end{eqnarray*}
where $\mr{O}_n^m(R)$ and $\SO_n^m(R)$ are the kernels of the maps 
$\mr{O}_n(R)\rar \mr{O}_n(R/P^m)$ and  
$\SO_n(R)\rar \SO_n(R/P^m)$ 
of reduction modulo $P^m$.
\end{theoremx}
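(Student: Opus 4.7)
The plan is to compute $[\mr{O}_n(R) : \mr{O}_n^m(R)]$ by telescoping through the filtration $\mr{O}_n(R) = \mr{O}_n^0(R) \supseteq \mr{O}_n^1(R) \supseteq \cdots \supseteq \mr{O}_n^m(R)$, computing each factor $[\mr{O}_n^i(R) : \mr{O}_n^{i+1}(R)]$ separately and then multiplying; the $\mr{SO}$ statements will follow from a short determinant argument. Since $2$ is a unit in $R$, we have $-1 \neq 1$ in $R/P^m$, so any $g \in \mr{O}_n^m(R)$ is forced to have $\det g = 1$; thus $\mr{O}_n^m(R) = \mr{SO}_n^m(R)$. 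On the other hand, for any lift $v \in R^n$ of an anisotropic vector $\bar v \in \F_q^n$ we have $v^T J v \in R^{\times}$, so the reflection formula $x \mapsto x - 2(v^T J v)^{-1}(v^T J x)\,v$ defines an element of $\mr{O}_n(R)$ of determinant $-1$; hence $[\mr{O}_n(R) : \mr{SO}_n(R)] = 2$, which gives the two identities in the statement relating the $\mr{O}$ and $\mr{SO}$ indices.

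The first substantive step is the surjectivity of reduction modulo $P$, that is, $[\mr{O}_n(R) : \mr{O}_n^1(R)] = |\mr{O}_n(\F_q)|$. The key point, which bypasses any Henselian assumption on $R$, is the Cartan--Dieudonn\'e theorem: in characteristic different from $2$, the group $\mr{O}_n(\F_q)$ is generated by reflections through anisotropic vectors, and each such reflection lifts to $\mr{O}_n(R)$ via the explicit formula above. Products of these lifts provide a preimage of any element of $\mr{O}_n(\F_q)$.

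The second substantive step is to prove, for every $i \ge 1$, that $[\mr{O}_n^i(R) : \mr{O}_n^{i+1}(R)] = q^{n(n-1)/2}$. Writing $g = 1 + \pi^i Z$ for $g \in \mr{O}_n^i(R)$, the orthogonality $g^T J g = J$ yields $\pi^i(Z^T J + J Z) + \pi^{2i} Z^T J Z = 0$, so $\bar Z \pmod P$ lies in the orthogonal Lie algebra $\mfr{o}_n(\F_q)$; the map $g \mapsto \bar Z$ descends to an injection $\mr{O}_n^i(R)/\mr{O}_n^{i+1}(R) \hookrightarrow \mfr{o}_n(\F_q)$ whose kernel is trivial because $\bar Z = 0$ forces $\pi^i Z \in \pi^{i+1} M_n(R)$. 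For surjectivity I will use the Cayley transform: for $Y \in \pi^i \mfr{o}_n(R)$, $1 + Y$ is invertible, and a standard calculation exploiting $Y^T J = -JY$ shows $(1 - Y)(1 + Y)^{-1} \in \mr{O}_n^i(R)$ with residue $\equiv 1 - 2Y \pmod{\pi^{i+1}}$; since $2$ is a unit, this produces a preimage of every element of $\mfr{o}_n(\F_q)$, so the injection is in fact a bijection and $|\mr{O}_n^i(R)/\mr{O}_n^{i+1}(R)| = q^{\dim \mfr{o}_n} = q^{n(n-1)/2}$.

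Assembling these facts gives $[\mr{O}_n(R) : \mr{O}_n^m(R)] = |\mr{O}_n(\F_q)| \cdot q^{n(n-1)(m-1)/2}$. Substituting the classical orders $|\mr{O}_{2l+1}(\F_q)| = 2 q^{l^2}\prod_{i=1}^l (q^{2i}-1)$ for $q$ odd and $|\mr{O}_{2l}(\F_q)| = 2 q^{l(l-1)}(q^l - 1) \prod_{i=1}^{l-1}(q^{2i}-1)$ for the split form, then rewriting $q^{2i} - 1 = q^{2i}(1 - q^{-2i})$ and $q^l - 1 = q^l(1 - q^{-l})$, the exponents of $q$ collapse to $(2l^2 + l)m$ and $(2l^2 - l)m$ respectively, leaving exactly the rational factors in the statement. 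I expect the main technical hurdle to be the Cayley-transform step, where one must carefully verify the descent to residue classes and the correct interaction with the fixed bilinear form $J$; the remaining computations amount to bookkeeping with the classical order formulas.
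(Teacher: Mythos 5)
Your argument is correct, but it follows a genuinely different route from the paper. You filter in the congruence level $m$: you reduce to computing $[\mr{O}_n(R):\mr{O}_n^1(R)]$ via surjectivity of reduction mod $P$ (obtained from Cartan--Dieudonn\'e plus lifting of reflections through anisotropic vectors), identify each graded quotient $\mr{O}_n^i(R)/\mr{O}_n^{i+1}(R)$ with the additive group of $\mfr{so}_n(\F_q)$ via the Cayley transform, and then import the classical order formulas for the finite orthogonal groups (correctly using the plus-type order for $n=2l$, which matches the hyperbolic form $S$ of the paper). The paper instead recurses in $n$: it decomposes $\rho_{P^m}(\mr{O}_n(R))$ by orbit--stabilizer on the ``sphere'' $\mr{C}_n(R)$ (the orbit of $e_1$), counts isotropic unimodular vectors modulo $P^m$ directly, and shows the stabilizer of $e_1$ contributes $|\rho(\mr{O}_{n-2}(R))|\cdot q^{(n-2)m}$, so that no external input about $|\mr{O}_n(\F_q)|$ is needed -- indeed those orders drop out of the recursion as a byproduct. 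Your approach is the standard one for congruence filtrations and generalizes immediately to other smooth group schemes over such rings, at the cost of citing Cartan--Dieudonn\'e and the finite group orders; the paper's approach is longer but entirely self-contained and handles all levels $m$ in a single count. The two auxiliary verifications you should make explicit are that $\pi^i$ not being a zero divisor makes $Z$ well defined in $g=1+\pi^iZ$ (so the map to $\mfr{so}_n(\F_q)$ is well defined and injective), and that an anisotropic vector exists for the given $S$ (e.g.\ $e_1+e_{l+1}$), so that the determinant $-1$ element and the reflection lifts are available; both are immediate.
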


\bigskip
If a group $G$ is self-similar then we denote by $\delta(G)$ the minimum positive integer $\delta$
such that $G$ is self-similar of index $\delta$.

\begin{corollaryx}
\label{cminindex}
In the context above Theorem \ref{tssclgpK2}, let $m\ges e$ be an integer.
Then the following holds. 
\begin{enumerate}
\item 
Assume that 
$(l^2+2l)d\les p$.
Then 
$$
\delta(\SL_{l+1}(R))\les 
q^{l+(l^2+2l)e}\prod_{i=1}^{l}
\left(1- \frac{1}{q^{i+1}}\right)
\qquad \mbox{and}\qquad 
\delta(\SL_{l+1}^m(R))\les q^l.
$$

\item 
Assume that 
$(2l^2+l)d\les p$.
Then 
$$
\delta(\Sp_{2l}(R))\les 
q^{2l+(2l^2+l)e}\prod_{i=1}^l
\left(1- \frac{1}{q^{2i}}\right)
\qquad \mbox{and}\qquad 
\delta(\Sp_{2l}^m(R))\les q^{2l}.
$$

\item 
Assume that 
$l\ges 2$ and 
$(2l^2-l)d\les p$.
Then 
$$
\delta(\SO_{2l}(R))\les 
q^{2l-2+(2l^2-l)e}
\left(1- \frac{1}{q^{l}}\right)
\prod_{i=1}^{l-1}
\left(1- \frac{1}{q^{2i}}\right)
\qquad \mbox{and}\qquad 
\delta(\SO_{2l}^m(R))\les q^{2l-2}. 
$$

\item 
Assume that 
$(2l^2+l)d\les p$.
Then 
$$
\delta(\SO_{2l+1}(R))\les 
q^{2l-1+(2l^2+l)e}\prod_{i=1}^l
\left(1- \frac{1}{q^{2i}}\right)
\qquad \mbox{and}\qquad 
\delta(\SO_{2l+1}^m(R))\les q^{2l-1}.
$$
\end{enumerate}
\end{corollaryx}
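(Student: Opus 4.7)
The plan is to specialize the self-similarity indices from the previous results to the smallest admissible values of the parameters $k$ and $m$. Recall that $\delta(G)$ is defined as the minimum positive integer $\delta$ for which $G$ is self-similar of index $\delta$, so any concrete value of $\delta$ witnessing the self-similarity of $G$ immediately yields an upper bound on $\delta(G)$.

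For the bounds on the principal congruence subgroups, I would first verify that the numerical hypotheses translate correctly between the two statements. For instance, Theorem \ref{tssclgpK}(1) requires $(n^2-1)d\les p$, which for $n = l+1$ reads $(l^2+2l)d\les p$, matching the hypothesis of item (1) here; the analogous checks for $\Sp$, $\SO_{2l}$, and $\SO_{2l+1}$ are obtained by substituting $n = 2l$ or $n = 2l+1$ into the respective hypotheses $(n^2+n)d\les 2p$ or $(n^2-n)d\les 2p$ (and noting that $n\ges 3$ for $\SO$ corresponds to $l\ges 2$ or $l\ges 1$, as appropriate). Then, for the fixed $m\ges e$, I would apply Theorem \ref{tssclgpK} with $k = 1$: the indices $q^{F(n)k}$ specialize to $q^l$, $q^{2l}$, $q^{2l-2}$, and $q^{2l-1}$ respectively, which are precisely the claimed upper bounds on $\delta(\Sigma_n^m(R))$.

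For the bounds on $\delta(\Sigma_n(R))$ themselves, I would apply Theorem \ref{tssclgpK2} with $k = 1$ and $m = e$ (the smallest value admitted by the hypothesis $m\ges e$ in that theorem). The resulting self-similarity index for each of the four classical families matches termwise the expression appearing on the right-hand side of the corresponding item, so no additional computation is required.

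There is no substantial obstacle in this derivation: the corollary is a direct distillation of Theorem \ref{tssclgpK} and Theorem \ref{tssclgpK2}, and the only points requiring attention are the bookkeeping of the ramification bound $m\ges e$ (which forces $m = e$ to be the minimizing choice) and the translation between the parameter $n$ used earlier and the rank $l$ used in the present statement.
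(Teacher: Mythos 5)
Your proposal is correct and matches the paper's (implicit) derivation: the corollary is obtained by specializing Theorem \ref{tssclgpK2} and Theorem \ref{tssclgpK} to $k=1$ and $m=e$, after the routine translation between $n$ and $l$ in the hypotheses. The paper does not spell out a separate proof, so your bookkeeping of the parameter choices is exactly what is intended.
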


\begin{problemx}
\label{pminindex}
Compute $\delta(G)$ for the groups $G$ that appear in Corollary \ref{cminindex}.
\end{problemx}

\bigskip

We close the Introduction by discussing the role of Lie algebras. 
The Lazard's correspondence is 
an isomorphism of categories between the category
of saturable  finitely generated pro-$p$ groups and the category of saturable $\bb{Z}_p$-Lie lattices;
see \cite[IV (3.2.6)]{Laz65}.
We recall that the Lazard's correspondence 
restricts to an isomorphism between the category
of uniform pro-$p$ groups and the category of powerful $\bb{Z}_p$-Lie lattices;
see \cite{DixAnaProP}.
In \cite{NS2019} the study of self-similar actions of $p$-adic analytic pro-$p$ groups was initiated
by translating the notion of simple virtual endomorphism from groups to Lie algebras.
We recall the relevant definitions.
A $\bb{Z}_p$-Lie lattice is a finitely generated free $\bb{Z}_p$-module 
endowed with a structure of Lie algebra over $\bb{Z}_p$.
Let $L$ be a $\bb{Z}_p$-Lie lattice and $k\in\bb{N}$.
A virtual endomorphism
of $L$ of index $p^k$ is an algebra  homomorphism $\varphi:M\rar L$
where $M$ is a subalgebra of $L$ of index $p^k$,
where the index is taken with respect to the additive group structure.
An ideal $I$ of $L$ is said to be $\varphi$-invariant if
$I\subseteq M$ and $\varphi(I)\subseteq I$.
We say that a virtual endomorphism $\varphi$ is simple if there are no nonzero ideals $I$
of $L$ that are $\varphi$-invariant.
Finally, 
we say that $L$ is {self-similar of index} ${p^k}$ if there exists
a simple virtual endomorphism of $L$ of index $p^k$,
and we say that $L$ is self-similar if 
it is self-similar of index $p^k$ for some $k$.
Proposition \cite[Proposition A]{NS2019} implies that if the uniform pro-$p$ group 
$G$ and the $\bb{Z}_p$-Lie lattice $L$ correspond to each other via the Lazard's correspondence,
and $\mr{dim}(G)\les p$, then $G$ is self-similar of index $p^k$ if and only if $L$ is self-similar of index
$p^k$. 

The $\bb{Q}_p$-Lie algebras associated with the $p$-adic analytic groups 
$\SL_n(K)$, $\Sp_n(K)$, and $\SO_n(K)$
are 
$\sll_n(K)$, $\spt_n(K)$, and $\so_n(K)$;
for more details on these Lie algebras, see the respective subsections
of Section \ref{seclieclass}.
The mentioned Lie algebras are split simple $K$-Lie algebras 
(hence, $\bb{Q}_p$-Lie algebras by restriction of scalars)
and the knowledge of the structure of such objects,
for instance, the root space decomposition, plays a prominent role.
The corresponding $R$-Lie lattices 
$\sll_n(R)$, $\spt_n(R)$, and $\so_n(R)$, 
which are also $\Z_p$-Lie lattices,
have principal congruence sublattices 
$\sigma_n^m(R)= \pi^m\sigma_n(R)$, where $m\ges 0$ is an integer
and $\sigma$ is any of the symbols in the set $\{\sll, \spt, \so\}$.
We define $\theta = 1 $ if $p\ges 3$, and $\theta = 2$ if $p=2$.
For $m\ges \theta e$, $\sigma_n^m(R)$ is a powerful Lie lattice
whose associated uniform pro-$p$ group is $\Sigma_n^m(R)$,
where $\Sigma$ is the symbol that corresponds
to $\sigma$ in the obvious sense.
See Section \ref{secpfA} for more details.

\bigskip 

The following theorem has an independent interest, and it allows to prove Theorem \ref{tssclgpK}.

\begin{theoremx}\phantom{}
\label{tsscllieK}
In the above context assume that $n\ges 2$, and let $k\ges 1$ and $m\ges 0$ be integers.
Then the following holds.
\begin{enumerate}
\item 
\label{tsscllieK1}
The $\Z_p$-Lie lattice $\sll_n^m(R)$ is self-similar of index $q^{(n-1)k}$.

\item 
\label{tsscllieK2}
For $n$ even,
the $\Z_p$-Lie lattice $\spt_{n}^m(R)$ is self-similar of index $q^{nk}$.

\item 
\label{tsscllieK3}
For $n\ges 3$,
the $\Z_p$-Lie lattice $\so_{n}^m(R)$ is self-similar of index $q^{(n-2)k}$.
\end{enumerate}
\end{theoremx}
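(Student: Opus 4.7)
The plan is to construct, for each of the three Lie lattices, an explicit simple virtual endomorphism of index $q^{(n-1)k}$, $q^{nk}$, and $q^{(n-2)k}$ respectively, in a uniform way using a cocharacter of the split maximal torus of the ambient classical group together with the induced $\bb{Z}$-grading of the Lie algebra.

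I would fix the standard split Cartan subalgebra of $\sigma_n(R)$ (where $\sigma \in \{\sll, \spt, \so\}$) and the cocharacter $\lambda(t) = \diag(1,\dots,1,t^{-1})$ for $\sll_n$, respectively $\lambda(t) = \diag(t,1,\dots,1,t^{-1})$ in a suitable symplectic or orthogonal basis for $\spt_n$ and $\so_n$. This induces a $\bb{Z}$-grading $\sigma_n(R) = \bigoplus_i \mfr{g}_i$ whose nonzero pieces are $\mfr{g}_{-1}, \mfr{g}_0, \mfr{g}_1$ for $\sll_n$ and $\so_n$, and $\mfr{g}_{-2},\mfr{g}_{-1}, \mfr{g}_0, \mfr{g}_1, \mfr{g}_2$ for $\spt_n$ (the grading is three-step for $\sll_n$ because the chosen cocharacter is minuscule, and for $\so_n$ because types $B_l$ and $D_l$ have no roots of double weight along this $\lambda$). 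A dimension count in the root system gives $\dim_R \mfr{g}_{-1} = n-1$ for $\sll_n$, $= n-2$ for $\spt_n$ and $\so_n$, and $\dim_R \mfr{g}_{-2} = 1$ for $\spt_n$. Setting $L := \sigma_n^m(R)$, I define the $\bb{Z}_p$-sublattice
\[
  M := \pi^m \mfr{g}_{\geq 0} \,\oplus\, \pi^{m+k} \mfr{g}_{-1} \,\oplus\, \pi^{m+2k} \mfr{g}_{-2},
\]
the last summand being present only in the symplectic case. Compatibility of the Lie bracket with the grading makes $M$ a Lie subalgebra of $L$, and a coordinate-wise $\bb{Z}_p$-index count yields $[L:M] = q^{(n-1)k}$, $q^{nk}$, or $q^{(n-2)k}$ respectively. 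The virtual endomorphism $\varphi : M \to L$ is defined as conjugation by $g := \lambda(\pi^k)$; since $g$ lies in the corresponding $K$-group, conjugation preserves brackets, and because $\mr{Ad}(g)$ acts on $\mfr{g}_i$ as multiplication by $\pi^{ik}$, a piece-by-piece check shows $\varphi(M) \subseteq L$.

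The heart of the argument is the simplicity of $\varphi$: suppose $I$ is a nonzero ideal of $L$ with $I \subseteq M$ and $\varphi(I) \subseteq I$. Iterating gives $\varphi^t(I) \subseteq L$ for all $t \ges 1$, and since the negative-grading components of $\varphi^t(X)$ are scaled by $\pi^{-itk}$ while forced to remain in $\pi^m \mfr{g}_i$, the components $X_i$ of any $X \in I$ with $i < 0$ must vanish, i.e. $I \subseteq \pi^m \mfr{g}_{\geq 0}$. Next, for $X = X_0 + X_1 + X_2 \in I$ and $Y \in \pi^m \mfr{g}_{-1} \subseteq L$, the bracket $[Y, X] \in I$ has $\mfr{g}_{-1}$-component $[Y, X_0]$, which by the previous step must also vanish; thus $X_0$ centralizes $\mfr{g}_{-1}$ inside $\mfr{g}_0$. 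A direct matrix computation in the Levi subalgebra of the respective root system shows this centralizer is trivial, so $X_0 = 0$, and a symmetric argument bracketing against elements of $\pi^m \mfr{g}_{-1}$ and (for $\spt_n$) $\pi^m \mfr{g}_{-2}$ in turn kills $X_1$ and $X_2$, giving $I = 0$.

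The main obstacle lies in the second half of the simplicity proof, namely the propagation of vanishing from the negative to the non-negative grading pieces. Because $I$ is only required to be a $\bb{Z}_p$-submodule (not an $R$-submodule) of $L$, the linear relations extracted from bracketing with Cartan or root vectors must be analyzed at the level of $\bb{Z}_p$-coefficients, and one must confirm that the relevant non-degeneracy persists for every prime $p$ and every $n$ allowed by the statement. The symplectic case additionally requires an extra round of commutator calculations to handle the double-weight pieces $\mfr{g}_{\pm 2}$, and this is where the uniformity breaks down into type-by-type verifications.
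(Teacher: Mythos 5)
Your construction is essentially the paper's: $\varphi$ is conjugation by the same diagonal torus element $\mr{diag}(\dots,\pi^{\pm k},\dots)$, the domain $M$ and the index count via the root-space (equivalently, grading) decomposition coincide, and your two-stage simplicity argument --- iterating $\varphi$ to kill the negatively graded components, then bracketing with root vectors to kill the $\mfr{g}_0$ and positive pieces --- is exactly the content of the paper's Lemma \ref{lkarinaK} combined with the $D_\infty(\varphi)$ observation of Remark \ref{rDinfty}. The obstacle you flag about $I$ being only a $\bb{Z}_p$-submodule dissolves as in that lemma: one argues on a single element $x\in I$, whose root components lie in the rank-one $R$-modules $L_\alpha$, so the nonvanishing of the relevant brackets follows from $R$ being a domain of characteristic zero together with $\bigcap_{\alpha\in\Phi-\Psi}\ker\alpha=\{0\}$, with no type-by-type $\bb{Z}_p$-coefficient analysis needed.
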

Item (\ref{tsscllieK1}) is proved in Section \ref{ssllK},
item (\ref{tsscllieK2}) is proved in Section \ref{sspK},
item (\ref{tsscllieK3}) for $n$ even is proved in Section \ref{ssoevenK},
and 
item (\ref{tsscllieK3}) for $n$ odd is proved in Section \ref{ssooddK}.
The following corollary, analogous to Corollary \ref{cssclgpK},
is a consequence of \cite[Lemma 2.1]{NS2019}.

\begin{corollaryx}
\label{csscllieK}
\phantom{}
\begin{enumerate}
\item 
All the $\Z_p$-Lie sublattices of finite index in
$\sll_n(R)$, $\spt_n(R)$, and $\so_n(R)$
are self-similar.

\item 
Let $\sigma$ be any of the symbols $\sll$, $\spt$, and $\so$. 
Assume that $n$, $m$, and $k$ are integers satisfying the constraints 
of Theorem \ref{tsscllieK}, and let 
$\sigma_n^m(R)\subseteq L\subseteq \sigma_n(R)$,
where $L$ is a $\Z_p$-Lie sublattice of $\sigma_n(R)$.
Then $L$ is self-similar of index $[L:\sigma_n^m(R)]q^{F(n)k}$,
where $F(n)$ is defined analogously to what is done in Corollary \ref{cssclgpK}.
\end{enumerate}
\end{corollaryx}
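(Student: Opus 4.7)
The plan is to deduce Corollary \ref{csscllieK} from Theorem \ref{tsscllieK} together with \cite[Lemma 2.1]{NS2019}. That lemma supplies the following principle: if $M$ is a $\Z_p$-Lie sublattice of $L$ which is simultaneously an \emph{ideal} of $L$, and $\varphi:D\ra M$ is a simple virtual endomorphism of $M$, then $\varphi$, regarded as the map $D\ra M\hookrightarrow L$, is a simple virtual endomorphism of $L$ of index $[L:D]=[L:M]\cdot [M:D]$.

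For part (2), the key structural observation is that $\sigma_n^m(R)=\pi^m\sigma_n(R)$ is always an ideal of $\sigma_n(R)$ because the bracket is $R$-bilinear, and is hence an ideal of any intermediate $\Z_p$-Lie sublattice $L$ with $\sigma_n^m(R)\subseteq L\subseteq\sigma_n(R)$. Theorem \ref{tsscllieK}, applied under the stated constraints, produces a simple virtual endomorphism $\varphi:D\ra\sigma_n^m(R)$ with $[\sigma_n^m(R):D]=q^{F(n)k}$. Invoking the principle above, $\varphi$ becomes a virtual endomorphism of $L$ of index $[L:\sigma_n^m(R)]\cdot q^{F(n)k}$, as required. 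Its simplicity is essentially transparent: any $\varphi$-invariant ideal $I$ of $L$ lies in $D\subseteq\sigma_n^m(R)$, and because $[\sigma_n^m(R),I]\subseteq [L,I]\subseteq I$, the subspace $I$ is automatically an ideal of $\sigma_n^m(R)$; simplicity of $\varphi$ over $\sigma_n^m(R)$ then forces $I=0$.

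For part (1), I would reduce to part (2) by observing that every finite-index $\Z_p$-sublattice $L$ of $\sigma_n(R)$ contains $\sigma_n^m(R)$ for all sufficiently large $m\ges e$: the finite quotient $\sigma_n(R)/L$ is annihilated by some $p^N$, and since $p$ differs from $\pi^e$ by a unit, $\pi^{eN}\sigma_n(R)\subseteq L$, so any $m\ges\max(eN,e)$ works, and part (2) with $k=1$ yields that $L$ is self-similar.

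The main---indeed the only---substantive ingredient is Theorem \ref{tsscllieK} itself, whose proof occupies the bulk of the paper and is carried out type by type in Sections \ref{ssllK}--\ref{ssooddK}. The present deduction is essentially formal: it amounts to extending the codomain of a simple virtual endomorphism along an ideal inclusion, an operation which preserves simplicity precisely because $\sigma_n^m(R)$ sits as an ideal inside any lattice $L$ wedged between it and $\sigma_n(R)$.
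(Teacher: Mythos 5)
Your proposal is correct and follows the route the paper intends: the paper derives Corollary \ref{csscllieK} by combining Theorem \ref{tsscllieK} with \cite[Lemma 2.1]{NS2019}, i.e.\ by viewing the simple virtual endomorphism of $\sigma_n^m(R)$ as a virtual endomorphism of the intermediate lattice $L$ and checking that simplicity and the index multiply up exactly as you describe (your observation that a nonzero $\varphi$-invariant ideal of $L$ contained in $\sigma_n^m(R)$ is automatically a $\varphi$-invariant ideal of $\sigma_n^m(R)$ is the crux, and your reduction of part (1) to part (2) via $\pi^{eN}\sigma_n(R)\subseteq L$ is the standard one). No gaps.
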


\vspace{5mm}

\noindent 
\textbf{Notation.}
Recall that $p$ denotes a prime. The set $\bb{N}=\{0,1,2,...\}$ of natural numbers is assumed to contain $0$.
Let $R$ be an associative commutative ring with unit; 
in the applications of this paper, $R$ will be a finite field extension of 
the field $\Q_p$ of $p$-adic numbers,
or its ring of integers.
When $R$ is not a field, the terminology $R$-Lie lattice means 
a finitely generated free $R$-module
endowed with the structure of $R$-Lie algebra.
We denote by $\mr{M}_{n\times m}(R)$
the set of matrices of size $n\times m$ with entries in $R$,
and set $\gl_n(R) = \mr{M}_n(R) =\mr{M}_{n\times n}(R)$; 
the latter is an $R$-Lie algebra with respect to the commutator of matrices.
We use the notation $\Id_n$ for the $n\times n$ identity matrix,
while the transpose of a square matrix $x$ will be denoted by $x^{\mr{t}}$. 

\vspace{5mm}

\noindent 
\textbf{Acknowledgments.} 
The authors thank Ilir Snopce for suggesting the study of this topic and for comments
on early versions of this manuscript that helped improve it.


\section{Self-similarity of classical Lie lattices} 
\label{seclieclass}

For generalities on Lie algebras and root systems, 
the reader may consult, for instance, \cite{JacLieA, BouLieGrAlg3}.
We first aim to prove the technical Lemma \ref{lkarinaK},
for which we let $R$ be a domain of characteristic zero.
The lemma will be applied in cases where $R$ is the ring of integers of a $p$-adic field.
Let $\Phi$ be a root system of dimension $c$.
We will deal with matrix Lie algebras $L$ over $R$
with root systems of type $A$, $B$, $C$, and $D$;
all of these Lie algebras have the structure described as follows.
To start with, $L$ is freely generated over $R$ as an $R$-module
with basis, say, 
$(h_i:\, i\in\{1,...,c\}; e_\alpha:\, \alpha\in \Phi)$.
We denote $H=L_0=\bigoplus_{i} Rh_i$, and $L_\alpha = Re_\alpha$, so that 
$$
L= H\oplus 
\left(
\bigoplus_{\alpha\in\Phi} L_\alpha 
\right).
$$
The root system $\Phi$ is identified with a subset of $H^*=\mr{Hom}_R(H,R)$, 
and we denote by $(\vep_i)$ the dual basis of $(h_i)$ in $H^*$.
For $\gamma\in \mr{span}_\bb{Z}(\Phi) - (\Phi\cup\{0\})$
we define $L_\gamma =\{0\}\subseteq L$,
and for $\gamma \in \mr{span}_\bb{Z}(\Phi)$
we denote by $\pi_\gamma:L\rar L_\gamma$ the projection.
The Lie-algebra structure satisfies, among others, the following properties.
\begin{enumerate}
\item 
The structure constants of $L$ with respect to the basis 
$(h_i,e_\alpha)$ are integers.

\item 
$[h_i, h_j]=0$ for all $i,j$.

\item 
$[L_\alpha , L_\beta]\subseteq L_{\alpha +\beta}$
for all $\alpha,\beta\in \mr{span}_\bb{Z}(\Phi)$.

\item
For $h\in H$ and $\alpha\in \Phi$, $[h,e_\alpha]=\alpha(h)e_\alpha$.

\item
$[e_\alpha,e_{-\alpha}]\neq 0$ for all $\alpha\in \Phi$.

\item 
For $\beta,\gamma\in \mr{span}_\bb{Z}(\Phi)$,
$x\in L$, and $y\in L_\beta$
we have 
$$
\pi_\gamma([x,y])= [\pi_{\gamma-\beta}(x),y].
$$
\end{enumerate}

\begin{lem}
\label{lkarinaK}
In the above context,
let $a\in R-\{0\}$, $\Psi\subseteq \Phi$,
$$
D= H\oplus 
\left(
\bigoplus_{\alpha\in\Psi} L_\alpha 
\right),
$$
and assume that 
$$
\bigcap_{\alpha\in \Phi - \Psi} \mr{ker}(\alpha)=\{0\}.
$$
Let $I\subseteq aL$ be a subset such that $I\not\subseteq \{0\}$ and, for all 
$x\in I$ and $y\in aL$, we have $[x,y]\in I$.
Then $I\not\subseteq D$.
\end{lem}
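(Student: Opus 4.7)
The plan is to argue by contradiction, assuming $I\subseteq D$, and to extract a contradiction in two steps, first showing that every element of $I$ has trivial Cartan component, and then producing an element of $I$ whose Cartan component is visibly nonzero.

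\textbf{Step 1.} I first show that for every $x\in I$, the component $x_H:=\pi_0(x)\in H$ vanishes. Fix any $\beta\in \Phi-\Psi$ and bracket with $ae_\beta\in aL$ to obtain $y=[ae_\beta,x]\in I\subseteq D$. Since $\beta\notin\Psi$, the definition of $D$ gives $\pi_\beta(y)=0$. On the other hand, writing $x=x_H+\sum_{\alpha\in\Phi}x_\alpha$ and using properties (3) and (4), each $[ae_\beta,x_\alpha]$ lies in $L_{\alpha+\beta}$ and contributes to $\pi_\beta$ only when $\alpha=0$, which is excluded for $\alpha\in\Phi$. Hence
\[
\pi_\beta(y)=[ae_\beta,x_H]=-a\,\beta(x_H)\,e_\beta,
\]
and since $R$ is a domain with $a\neq 0$ and $e_\beta$ is a basis vector, $\beta(x_H)=0$. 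As $\beta\in\Phi-\Psi$ was arbitrary, the hypothesis $\bigcap_{\beta\in\Phi-\Psi}\ker(\beta)=\{0\}$ forces $x_H=0$.

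\textbf{Step 2.} Pick any nonzero $x\in I$. By Step 1 and the assumption $I\subseteq D$, one has $x=\sum_{\alpha\in\Psi}x_\alpha$, so there is some $\alpha_0\in\Psi$ with $x_{\alpha_0}=c\,e_{\alpha_0}\neq 0$; moreover $c\in aR$ because $x\in aL$. Since $\Phi$ is a root system, $-\alpha_0\in\Phi$, so $ae_{-\alpha_0}\in aL$ is available as a bracketing element. Set $z=[ae_{-\alpha_0},x]\in I$ and project to $H=L_0$: using property (3), $[ae_{-\alpha_0},x_\alpha]\in L_{\alpha-\alpha_0}$, and this sits in $H$ only for $\alpha=\alpha_0$. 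Therefore
\[
\pi_0(z)=[ae_{-\alpha_0},x_{\alpha_0}]=ac\,[e_{-\alpha_0},e_{\alpha_0}],
\]
which is nonzero by property (5) and the fact that $R$ is a domain with $a,c\neq 0$. But Step 1 applied to $z\in I$ says $\pi_0(z)=0$, a contradiction. Hence $I\not\subseteq D$.

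The proof is essentially formal once the axiomatic properties (1)--(6) are fixed; the only ingredients beyond straightforward root-grading bookkeeping are that $R$ is a domain (to cancel $a$ and $c$) and the symmetry $\alpha_0\in\Phi\Rightarrow -\alpha_0\in\Phi$, used to legitimize bracketing with $ae_{-\alpha_0}$. The non-degeneracy assumption on $\Phi-\Psi$ enters exactly once, to conclude $x_H=0$ in Step 1. There is no real obstacle beyond lining up these two opposing calculations: $I\subseteq D$ kills components in $L_\beta$ for $\beta\notin\Psi$, while the $\varphi$-invariance under the full $aL$ (in particular under $ae_{-\alpha_0}$) forces an $H$-component to appear.
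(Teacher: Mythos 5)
Your argument is correct and is essentially the paper's proof reorganized as a contradiction: your Step 1 matches the paper's case $h\neq 0$ (bracketing with $ae_\beta$ for $\beta\in\Phi-\Psi$ to detect the Cartan part via the non-degeneracy hypothesis), and your Step 2 matches the paper's case $x_{\alpha_0}\neq 0$ with $\alpha_0\in\Psi$ (bracketing with $ae_{-\alpha_0}$ and using $[e_{\alpha_0},e_{-\alpha_0}]\neq 0$). One cosmetic point: the closure hypothesis gives $[x,y]\in I$ for $x\in I$ and $y\in aL$ in that order, so you should write $[x,ae_\beta]$ and $[x,ae_{-\alpha_0}]$ rather than $[ae_\beta,x]$ and $[ae_{-\alpha_0},x]$ (which are only the negatives of elements known to lie in $I$); this changes nothing since the relevant projections are merely negated and the vanishing/non-vanishing conclusions are unaffected.
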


\begin{proof2}
We will exhibit an element of $I  -  D$.
Let $x \in I  -  \{0\}$ and write $x = h + \sum\limits_{\alpha \in \Phi} x_\alpha$ 
with $h = \pi_0(x)$ and $x_\alpha = \pi_\alpha(x)$. We have the following cases.
\begin{enumerate}
 	\item \label{csni2.1} 
    If $x_\alpha \neq 0$ for some $\alpha \in \Phi- \Psi$ then $x \notin D$.
    
    \item \label{csni2.2} 
    If $h \neq 0$, by hypothesis there is some $\alpha \in \Phi-\Psi$ such that $\alpha(h)\neq 0$. 
    Then, for $y \in a L_\alpha  -  \{0\}$, we have
    $\pi_\alpha([x,y]) = [h, y] = \alpha(h)y \neq 0,$
    which proves that $[x,y] \not \in D$ by item (\ref{csni2.1}).
    Observe that, by assumption on $I$, $[x,y]\in I$.
    
    \item If $x_\alpha \neq 0$ for some 
    $\alpha \in \Psi$, we take $y \in a L_{-\alpha}  -  \{0\}$, 
    and we may apply 
    item (\ref{csni2.2}) to $[x,y] \in I$, since
    $\pi_0([x,y]) = [x_\alpha, y] \neq 0$.\ep
\end{enumerate}
\end{proof2}

\begin{remark}
\label{rDinfty}
Lemma \ref{lkarinaK} will be used to prove that the appropriate virtual endomorphisms $\varphi:M\rar L$ are simple.
Given $\varphi$ and $k\in\bb{N}$, we denote by $D_k$ the domain of the power $\varphi^k$, and we denote by 
$D_\infty(\varphi)$
the intersection of such domains; see \cite[Definition 1.1]{NS2019}.
If $I$ is a $\varphi$-invariant ideal of $L$ then it is easy to see that $I\subseteq D_\infty(\varphi)$;
see \cite[Lemma 2.1]{NS2019}.
\end{remark}

\bigskip 

In what follows,
we denote by
$e_{i,j} = e_{ij}$ 
the square matrix with 1 on entry $(i,j)$ and 0 elsewhere. 
Observe that $e_{ij}$ will represent matrices of different sizes; 
on the other hand, the size of any occurrence of such a matrix
will always be clear from the context.
Also, the notation $\mr{diag}(a_1,...,a_n)$ stands for the $n\times n$ diagonal
matrix with diagonal entries $a_1,...,a_n$.

\subsection{Self-similarity of \texorpdfstring{$\sll_{n}(R)$}{sl\_n(R)}}
\label{ssllK}

Part (\ref{tsscllieK1}) of Theorem \ref{tsscllieK} is a consequence of Proposition \ref{psssl} proven below.
We recall the context from the Introduction, in which $K$ is a finite field extension of $\bb{Q}_p$,
$R$ is the ring of integers of $K$, $\pi$ is a uniformizing parameter of $R$, and $q=|R/\pi R|$. 

We fix an integer $n\geq 2$, and we let the indices $i,j$ take values $1\les i,j\les n$. 
The $K$-Lie algebra 
$\sll_n(K)=\{x\in \gl_n(K):\, \tr(x)=0\}$ 
has a basis composed by the matrices $h_i = e_{ii} - e_{nn}$ for $i<n$ and $e_{ij}$ for $i\neq j$.
This is also a basis for the $R$-Lie lattice 
$L=\sll_n(R)=\{x\in \gl_n(R):\, \tr(x)=0\}$.
Observe that $L$ admits a decomposition such as the one described at 
the beginning of Section \ref{seclieclass} for the root system $\Phi = A_{n-1}$;
moreover, recall that the dual basis of $h_1,...,h_{n-1}$ is denoted by $\vep_1,...,\vep_{n-1}$. 
We define
$\sigma = \sum_{i<n}\varepsilon_i$.
Then the root system at hand is
\begin{gather*}
    \Phi = \{\varepsilon_i - \varepsilon_j :\,  i,j < n\mbox{ and } i\neq j \} 
    \cup \{\pm(\sigma + \varepsilon_i) :\,  i < n \}, 
\end{gather*}
while the decomposition of $L$ reads:
\begin{eqnarray*}
	H &=& \bigoplus_{i<n}R h_i,\\
    L_{\varepsilon_i - \varepsilon_j} &=& R e_{ij} \quad \text{for}\ i \neq j, \\
    L_{\sigma + \varepsilon_i} &=& R e_{in},\\
    L_{- \sigma - \varepsilon_i} &=& R e_{ni},
\end{eqnarray*}
where $i,j<n$.
Given a matrix $a = \diag(a_{1}, \dots, a_{n})$, $a_{i} \in K^{*}$, 
the $K$-Lie-algebra automorphism $\varphi(x) =axa^{-1}$ of $\sll_n(K)$ 
acts on the given basis by the rules:
$$
\begin{array}{rcll}
\varphi: \sll_{n}(K) & \to &\sll_{n}(K) & \\
h_{i} & \mapsto &h_{i} & \qquad i < n,\\
e_{ij} & \mapsto& a_{i} a_{j}^{-1} e_{ij} &\qquad  i \neq j.
\end{array}
$$
Define $M = L \cap \varphi^{-1}(L)$,
and let $m\in\bb{N}$.
Then $\pi^m M$ is an $R$-subalgebra of $\pi^mL$ of finite index, and the restriction 
$$\varphi_m:\pi^m M \to \pi^mL$$ 
of $\varphi$
is a virtual endomorphism of $\pi^mL = sl_n^m(R)$.

\begin{proposition}
\label{psssl}
In the above context, let $k\geq 1$ and take $a=\mr{diag}(1,...,1,\pi^k)$.
Then $\varphi_m$ is a simple virtual endomorphism of index $q^{(n-1)k}$ 
of the $\Z_p$-Lie lattice $\sll_n^m(R)$.
\end{proposition}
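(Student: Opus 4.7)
The plan is to verify the index of $\varphi_m$ separately from its simplicity. For the index, I would write down $M=L\cap\varphi^{-1}(L)$ explicitly using the action of $\varphi$ on the basis. With $a=\diag(1,\ldots,1,\pi^k)$ the scalar $a_ia_j^{-1}$ equals $1$ for $i,j<n$, equals $\pi^{-k}$ when $i<n,\,j=n$, and equals $\pi^k$ when $i=n,\,j<n$. Consequently
$$
M = H \oplus \bigoplus_{\substack{i,j<n\\ i\neq j}} R\,e_{ij} \oplus \bigoplus_{i<n} \pi^k R\,e_{in} \oplus \bigoplus_{i<n} R\,e_{ni},
$$
and $[L:M]=q^{(n-1)k}$ from the $n-1$ sublattices $\pi^k R\,e_{in}$. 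Since $x\mapsto\pi^m x$ is an $R$-module isomorphism $L\to\pi^mL$ sending $M$ to $\pi^mM$, the additive index $[\pi^mL:\pi^mM]$ is also $q^{(n-1)k}$, which is by definition the index of $\varphi_m$.

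For simplicity, let $I$ be a nonzero $\varphi_m$-invariant ideal of $\pi^mL$. By Remark \ref{rDinfty} one has $I\subseteq D_\infty(\varphi_m)$. Iterating $\varphi$ gives $\varphi^r(e_{ij})=a_i^r a_j^{-r}e_{ij}$, so $\varphi^r$ fixes $h_i$ and every $e_{ij}$ with $i,j<n$, scales $e_{in}$ by $\pi^{-rk}$, and scales $e_{ni}$ by $\pi^{rk}$. A coefficient $c\in K$ on $e_{in}$ (with $i<n$) survives in $\bigcap_{r\ge 0}\varphi^{-r}(\pi^mL)$ only if $c\in\bigcap_{r\ge 0}\pi^{m+rk}R=\{0\}$, whereas the remaining basis elements stay in $\pi^mL$ under every nonnegative iterate. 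Hence
$$
D_\infty(\varphi_m)=\pi^m D,\qquad D=H\oplus\bigoplus_{\alpha\in\Psi}L_\alpha,
$$
with $\Psi=\{\varepsilon_i-\varepsilon_j:i\neq j,\ i,j<n\}\cup\{-\sigma-\varepsilon_i:i<n\}$.

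I would then invoke Lemma \ref{lkarinaK} with $a=\pi^m$ and this $\Psi$. The complement $\Phi-\Psi$ equals $\{\sigma+\varepsilon_i:i<n\}$, and for $h=\sum_{j<n}c_jh_j$ the system $(\sigma+\varepsilon_i)(h)=\sum_j c_j+c_i=0$ for $i<n$ forces $c_1=\cdots=c_{n-1}=:c$ and then $nc=0$, yielding $c=0$ because $R$ has characteristic zero. Thus $\bigcap_{\alpha\in\Phi-\Psi}\ker(\alpha)=\{0\}$, Lemma \ref{lkarinaK} produces an element of $I$ not in $D$, and this contradicts $I\subseteq\pi^mD\subseteq D$; hence $I=\{0\}$ and $\varphi_m$ is simple. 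The only delicate point is identifying $D_\infty(\varphi_m)$ precisely and checking that the contracting directions of $a$ correspond to a set of excluded roots $\Phi-\Psi$ large enough for the nondegeneracy hypothesis of Lemma \ref{lkarinaK} to hold; once this is in place, the rest is formal.
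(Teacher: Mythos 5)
Your proof is correct and follows essentially the same route as the paper: explicit computation of $M$ and of the index $q^{(n-1)k}$, identification of $D_\infty$ with $H$ plus the non-contracted root spaces, verification that $\bigcap_{\alpha\in\Phi-\Psi}\ker\alpha=\{0\}$, and an appeal to Lemma \ref{lkarinaK}. The only cosmetic differences are that you compute $D_\infty(\varphi_m)$ exactly (as $\pi^m D$) where the paper merely bounds it by $D_\infty(\varphi_0)$, and you establish the kernel claim by solving the linear system rather than by choosing $i$ with $x_i\neq-\sum_j x_j$.
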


\begin{proof}
For $i < n$, we have
$\varphi(e_{ni}) = \pi^ke_{ni}$ and $\varphi(e_{in}) = \pi^{-k}e_{in}$,
and $\varphi$ acts as the identity on the remaining basis elements of ${L}$. 
We define $\Psi = \{\alpha \in \Phi :\, \sum_{i=1}^{n-1}\alpha(h_i) \leq 0\}$,
and we observe that
$\Phi-\Psi = \{\sigma+\vep_i:\,i<n\}$.
Thus 
$$
M = 
H 
\oplus 
\left[\bigoplus_{\alpha\in \Psi} Re_{\alpha }\right]
\oplus 
\left[\bigoplus_{\alpha\in \Phi - \Psi} R\pi^ke_{\alpha}\right],
$$ 
so that the quotient $L/M$ is isomorphic to 
$\bigoplus_{i<n}R/\pi^k R$
as an $R$-module.
Hence,
the index of $M$ in $L$ is $q^{(n-1)k}$ (cf. Remark \ref{rprinc}),
and we note that the index of $\pi^m M$ in $\pi^m L$ has the same value. 
We have 
$$
D_\infty(\varphi_m)\subseteq D_\infty(\varphi_0)=
H 
\oplus 
\left[\bigoplus_{\alpha\in \Psi} Re_{\alpha }\right],
$$
see Remark \ref{rDinfty} for the notation,
and we claim that 
\[\bigcap_{\alpha \in \Phi-\Psi} \ker\alpha = \{0\}.\]
Indeed, let $h = x_1h_1 + \dots + x_{n-1}h_{n-1} \in H$ with $h \neq 0$. 
By taking $i$ such that $x_i \neq -\sum_{j=1}^{n-1} x_j$, 
we have $\sigma +\varepsilon_i \in \Phi - \Psi$ and 
$(\sigma +\varepsilon_i)h = x_i+\sum_{j=1}^{n-1} x_j \neq 0$, 
that is, $h \not \in \ker (\sigma +\varepsilon_i)$, and the claim follows.

Finally, let $I$ be a nontrivial ideal of the $\bb{Z}_p$-Lie lattice $\pi^m L$. 
By Lemma \ref{lkarinaK}, $I\not\subseteq D_\infty(\varphi_0)$,
so that $I$ is not $\varphi_m$-invariant. 
This proves that $\varphi_m$ is simple.
\end{proof}

\subsection{Self-similarity of \texorpdfstring{$\spt_{2l}(R)$}{sp\_2l(R)}}
\label{sspK}

Part (\ref{tsscllieK2}) of Theorem \ref{tsscllieK} is a consequence of Proposition \ref{pssspK} proven below.
The context is the same one of Section \ref{ssllK}, but now
we fix an integer $l\geq 1$, and we let the indices $i,j$ take values $1\les i,j\les l$.
We write matrices in block form, where the blocks have size $l\times l$.
Let 
\begin{align*}
	s &= \begin{bmatrix}
    0 & \Id_l\\
    -\Id_l & 0
\end{bmatrix}, &
    m_{ij} &= \begin{bmatrix}
    e_{ij} & 0\\
    0 & -e_{ji}
\end{bmatrix},\\[5pt]
    n_{ij} &= \begin{bmatrix}
    0 & e_{ij} + e_{ji}\\
    0 & 0
\end{bmatrix} \quad \text{for} \ i \neq j,&
    n_{ii} &= \begin{bmatrix}
    0 & e_{ii}\\
    0 & 0
\end{bmatrix},\\[5pt]
    q_{ij} &= \begin{bmatrix}
    0 & 0\\
    e_{ij} + e_{ji} & 0
\end{bmatrix} \quad \text{for} \ i \neq j,&
    q_{ii} &= \begin{bmatrix}
    0 & 0\\
    e_{ii} & 0
\end{bmatrix},
\end{align*}
and note that $n_{ij} = n_{ji}$ and $q_{ij} = q_{ji}$.
The $K$-Lie algebra 
$\spt_{2l}(K) = \{x \in \gl_{2l}(K) :\, sx + x^{\mr{t}}s = 0\}$ 
consists of matrices in the form
\[\begin{bmatrix}
    m & n\\
    q & -m^{\mr{t}}
\end{bmatrix} \qquad m,n,q \in \gl_l(K),\]
such that $n^{\mr{t}} = n$ and $q^{\mr{t}} = q$. 
The $R$-Lie lattice 
$L = \spt_{2l}(R) = \{x \in \gl_{2l}(R) :\, sx + x^{\mr{t}}s = 0\}$ 
consists of matrices in the same form with $m,n,q \in \gl_l(R)$.
The matrices $m_{ij}$, and $n_{ji}, q_{ij}$ for $i\les j$ constitute
a basis of $\spt_{2l}(K)$ over $K$ and of $\spt_{2l}(R)$ over $R$.
Observe that $L$ admits a decomposition such as the one described at 
the beginning of Section \ref{seclieclass} for the root system $\Phi = C_{l}$;
moreover, recall that the dual basis of $h_1,...,h_{l}$, where we take $h_i=m_{ii}$, 
is denoted by $\vep_1,...,\vep_{l}$. 
The root system at hand is 
\begin{gather*}
    \Phi = \{\varepsilon_i - \varepsilon_j :\,  i\neq j \} 
    \cup \{\pm(\vep_i + \varepsilon_j) :\,  i \les j\}, 
\end{gather*}
while the decomposition of $L$ reads:
$$
\begin{array}{rcll}
	H &=& \bigoplus_{i}R m_{ii}, & \\[5pt]
    L_{\varepsilon_i - \varepsilon_j} &=& R m_{ij}& \text{for}\ i \neq j, \\[3pt]
    L_{\varepsilon_i + \varepsilon_j} &=& R n_{ji} &\text{for}\ i \leq j,\\[3pt]
    L_{-\varepsilon_i - \varepsilon_j} &=& R q_{ij} &\text{for}\ i \leq j.
\end{array}
$$
Let $k\geq 1$, and
define $a = \diag(a_1, \dots, a_{2l})$ with 
\[a_i = \begin{cases}
    \pi^k & \text{if } i = l,\\
    \pi^{-k} & \text{if } i = 2l,\\
    1 & \text{otherwise.}
\end{cases}\]
Consider the $K$-Lie-algebra automorphism $\varphi(x) =axa^{-1}$ of $\spt_{2l}(K)$.
Define $M = L \cap \varphi^{-1}(L)$,
and let $m\in\bb{N}$.
Then $\pi^m M$ is an $R$-subalgebra of $\pi^mL$ of finite index, and the restriction 
$$\varphi_m:\pi^m M \to \pi^mL$$ 
of $\varphi$ is a virtual endomorphism of $\pi^mL = \spt_{2l}^m(R)$.

\begin{proposition}
\label{pssspK}
In the above context, $\varphi_m$ is a simple virtual endomorphism of index $q^{2lk}$ 
of the $\Z_p$-Lie lattice $\spt_{2l}^m(R)$.
\end{proposition}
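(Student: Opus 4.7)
The plan is to mimic the proof of Proposition \ref{psssl}. A direct computation shows that $\varphi$ fixes the Cartan elements $m_{ii}$ pointwise and multiplies each root vector $e_\alpha$ by $\pi^{k\alpha(h_l)}$ (where $h_l=m_{ll}$), reflecting the fact that the diagonal matrix $a$ encodes the element $k h_l$ of $H$. Explicitly, using $\alpha(h_l)=\delta_{il}-\delta_{jl}$ for $\alpha=\varepsilon_i-\varepsilon_j$ and $\alpha(h_l)=\delta_{il}+\delta_{jl}$ for $\alpha=\varepsilon_i+\varepsilon_j$, one gets
\begin{align*}
\varphi(m_{ij}) &= \pi^{k(\delta_{il}-\delta_{jl})}m_{ij}, \\
\varphi(n_{ij}) &= \pi^{k(\delta_{il}+\delta_{jl})}n_{ij}, \\
\varphi(q_{ij}) &= \pi^{-k(\delta_{il}+\delta_{jl})}q_{ij}.
\end{align*}

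Next, I would define $\Psi=\{\alpha\in\Phi:\alpha(h_l)\ges 0\}$, so that $\Phi-\Psi$ consists of the $2l-1$ roots $\varepsilon_i-\varepsilon_l$ and $-\varepsilon_i-\varepsilon_l$ for $1\les i\les l-1$, together with $-2\varepsilon_l$. For $\alpha\in\Psi$ one has $\varphi(L_\alpha)\subseteq L_\alpha$, hence $L_\alpha\subseteq M$; for $\alpha\in\Phi-\Psi$ one has $M\cap L_\alpha=\pi^{|\alpha(h_l)|k}L_\alpha$. This yields
\[
L/M\,\cong\,(R/\pi^k R)^{\oplus(l-1)}\,\oplus\,(R/\pi^k R)^{\oplus(l-1)}\,\oplus\,R/\pi^{2k}R,
\]
whose cardinality is $q^{(2l-2)k+2k}=q^{2lk}$, and the same value is the index of $\pi^m M$ in $\pi^m L$ (cf.\ Remark \ref{rprinc}).

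For simplicity, the argument parallels Proposition \ref{psssl}. Since iterating $\varphi$ strictly decreases the $\pi$-adic valuation of any nonzero component along $L_\alpha$ for $\alpha\in\Phi-\Psi$, Remark \ref{rDinfty} gives that every $\varphi_m$-invariant ideal $I$ of $\pi^m L$ is contained in
\[
D_\infty(\varphi_m)\subseteq D_\infty(\varphi_0)=H\oplus\bigoplus_{\alpha\in\Psi}Re_\alpha=:D.
\]
Next one checks the hypothesis of Lemma \ref{lkarinaK}: for $h=\sum_{i=1}^l x_ih_i\in H$, imposing $\alpha(h)=0$ for every $\alpha\in\Phi-\Psi$ gives $x_i-x_l=0$ for $i<l$ (from $\varepsilon_i-\varepsilon_l$) and $-2x_l=0$ (from $-2\varepsilon_l$), which forces $x_l=0$ since $R$ is a domain of characteristic zero, hence $h=0$. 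Applying Lemma \ref{lkarinaK} with $a=\pi^m$ to a nontrivial ideal $I$ of $\pi^m L$ then yields $I\not\subseteq D$, so $I$ is not $\varphi_m$-invariant. Therefore $\varphi_m$ is simple.

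The only real subtlety, compared with the $\sll_n$ case, is the appearance of the long root $-2\varepsilon_l$, which forces $M\cap L_{-2\varepsilon_l}=\pi^{2k}L_{-2\varepsilon_l}$ and thereby contributes a factor $q^{2k}$ to the index; this is precisely what shifts the naive count $q^{2(l-1)k}$ to the correct $q^{2lk}$. Once this is handled, the rest of the argument is formally identical to the $\sll_n$ case.
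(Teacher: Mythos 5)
Your proposal is correct and follows essentially the same route as the paper's proof: the same choice of $\Psi=\{\alpha\in\Phi:\alpha(m_{ll})\ges 0\}$, the same computation of $M$ and of $L/M$ (with the long root $-2\varepsilon_l$ contributing the factor $q^{2k}$), the same identification of $D_\infty(\varphi_0)$, and the same appeal to Lemma \ref{lkarinaK} via the verification that $\bigcap_{\alpha\in\Phi-\Psi}\ker\alpha=\{0\}$. The only cosmetic difference is that you express the action of $\varphi$ uniformly as $e_\alpha\mapsto\pi^{k\alpha(h_l)}e_\alpha$ rather than listing the basis elements, which is an equivalent and correct reformulation.
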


\begin{proof}
For $i < l$, we have
\begin{align*}
    \varphi(m_{li}) &= \pi^k m_{li}, &  \varphi(m_{il}) &= \pi^{-k} m_{il},\\
    \varphi(n_{li}) &= \pi^k n_{li}, & \varphi(q_{il}) &= \pi^{-k} q_{il},\\
    \varphi(n_{ll}) &= \pi^{2k} n_{ll}, & \varphi(q_{ll}) &= \pi^{-2k} q_{ll},
\end{align*}
and $\varphi$ acts as the identity on the remaining basis elements of $L$.
We define 
$\Psi = \{\alpha \in \Phi :\, \alpha(m_{ll}) \geq 0\}$,
and we observe that
$\Phi-\Psi = 
\{\vep_i-\varepsilon_l:\, i < l\} 
\cup 
\{-\vep_i - \varepsilon_l\}
$.
Thus 
$$
M = 
H 
\oplus 
\left[\bigoplus_{\alpha\in \Psi} Re_{\alpha }\right]
\oplus 
\left[
	\bigoplus_{i<l}\left( R\pi^ke_{\vep_i-\varepsilon_l}
	\oplus
	R\pi^{k}e_{-\vep_i-\varepsilon_l}\right) 
	\right]
\oplus 
R\pi^{2k}e_{-2\vep_l},
$$ 
so that the quotient $L/M$ is isomorphic to 
$\left[\bigoplus_{i<l}R/\pi^k R\right]^{\oplus 2}\oplus R/\pi^{2k}R$
as an $R$-module.
Hence, the index of $M$ in $L$ is $q^{2lk}$,
and we note that the index of $\pi^m M$ in $\pi^m L$ has the same value. 
We have 
$$
D_\infty(\varphi_m)\subseteq D_\infty(\varphi_0)=
H 
\oplus 
\left[\bigoplus_{\alpha\in \Psi} Re_{\alpha }\right],
$$
see Remark \ref{rDinfty} for the notation,
and we claim that 
\[\bigcap_{\alpha \in \Phi-\Psi} \ker\alpha = \{0\}.\]
Indeed, suppose $h = x_1 m_{11} + \dots + x_l m_{ll} \in H$. If $x_l \neq 0$, then 
$-2\varepsilon_l \in \Phi - \Psi$ and 
$h \not\in \ker (-2\varepsilon_l)$. 
On the other hand, if $x_l = 0$ and $x_i \neq 0$ 
for some $i < l$, then $h(\varepsilon_i-\varepsilon_l)\neq 0$,
and the claim follows.

Finally, let $I$ be a nontrivial ideal of the $\bb{Z}_p$-Lie lattice $\pi^m L$. 
By Lemma \ref{lkarinaK}, $I\not\subseteq D_\infty(\varphi_0)$,
so that $I$ is not $\varphi_m$-invariant. 
This proves that $\varphi_m$ is simple.
\end{proof}

\subsection{Self-similarity of \texorpdfstring{$\so_{2l}(R)$}{so\_2l(R)}}
\label{ssoevenK}

Part (\ref{tsscllieK3}) of Theorem \ref{tsscllieK} for $n$ even is a consequence 
of Proposition \ref{psssoevenK} proven below.
The context is the same one of Section \ref{ssllK}, but now
we fix an integer $l\geq 2$, and we let the indices $i,j$ take values $1\les i,j\les l$.
We write matrices in block form, where the blocks have size $l\times l$.
Let 
\begin{align*}
	s &= \begin{bmatrix}
    0 & \Id_l\\
    \Id_l & 0
\end{bmatrix},& 
    m_{ij} &= \begin{bmatrix}
    e_{ij} & 0\\
    0 & -e_{ji}
\end{bmatrix},\\[5pt]
    n_{ij} &= \begin{bmatrix}
    0 & e_{ij} - e_{ji}\\
    0 & 0
\end{bmatrix} \quad \text{for} \ i \neq j,&
    q_{ij} &= \begin{bmatrix}
    0 & 0\\
    e_{ij} - e_{ji} & 0
\end{bmatrix} \quad \text{for} \ i \neq j,\\
\end{align*}
and note that $n_{ij} = -n_{ji}$ and $q_{ij} = -q_{ji}$.
The $K$-Lie algebra 
$\so_{2l}(K) = \{x \in \gl_{2l}(K) :\, sx + x^{\mr{t}}s = 0\}$ 
consists of matrices in the form
\[\begin{bmatrix}
    m & n\\
    q & -m^{\mr{t}}
\end{bmatrix} \qquad m,n,q \in \gl_l(K),\]
such that $n^{\mr{t}} = -n$ and $q^{\mr{t}} = -q$. 
The $R$-Lie lattice 
$L = \so_{2l}(R) = \{x \in \gl_{2l}(R) :\, sx + x^{\mr{t}}s = 0\}$ 
consists of matrices in the same form with $m,n,q \in \gl_l(R)$.
The matrices $m_{ij}$, and $n_{ji}, q_{ij}$ for $i< j$ constitute
a basis of $\so_{2l}(K)$ over $K$ and of $\so_{2l}(R)$ over $R$.
Observe that $L$ admits a decomposition such as the one described at 
the beginning of Section \ref{seclieclass} for the root system $\Phi = D_{l}$;
moreover, recall that the dual basis of $h_1,...,h_{l}$, where we take $h_i=m_{ii}$,  
is denoted by $\vep_1,...,\vep_{l}$. 
The root system at hand is 
\begin{gather*}
    \Phi = \{\varepsilon_i - \varepsilon_j :\,  i\neq j \} 
    \cup \{\pm(\vep_i + \varepsilon_j) :\,  i < j\}, 
\end{gather*}
while the decomposition of $L$ reads:
$$
\begin{array}{rcll}
	H &=& \bigoplus_{i}R m_{ii},&\\[5pt]
    L_{\varepsilon_i - \varepsilon_j} &=& R m_{ij} & \text{for}\ i \neq j, \\[3pt]
    L_{\varepsilon_i + \varepsilon_j} &=& R n_{ji} &\text{for}\ i < j,\\[3pt]
    L_{-\varepsilon_i - \varepsilon_j} &=& R q_{ij} &\text{for}\ i < j.
\end{array}
$$
Let $k\geq 1$, and
take $a = \diag(a_1, \dots, a_{2l})$ with 
\[a_i = \begin{cases}
    \pi^k & \text{if } i = l,\\
    \pi^{-k} & \text{if } i = 2l,\\
    1 & \text{otherwise.}
\end{cases}\]
Consider the $K$-Lie-algebra automorphism $\varphi(x) =axa^{-1}$ of $\so_{2l}(K)$.
Define $M = L \cap \varphi^{-1}(L)$,
and let $m\in\bb{N}$.
Then $\pi^m M$ is an $R$-subalgebra of $\pi^mL$ of finite index, and the restriction 
$$\varphi_m:\pi^m M \to \pi^mL$$ 
of $\varphi$ is a virtual endomorphism of $\pi^mL = \so_{2l}^m(R)$.

\begin{proposition}
\label{psssoevenK}
In the above context, 
$\varphi_m$ is a simple virtual endomorphism of index $q^{(2l-2)k}$ 
of the $\Z_p$-Lie lattice $\so_{2l}^m(R)$.
\end{proposition}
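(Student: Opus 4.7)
The proof follows the template established by Propositions \ref{psssl} and \ref{pssspK} very closely. First I would spell out how $\varphi$ acts on the chosen basis. Since $a$ differs from the identity only in positions $l$ (by $\pi^k$) and $2l$ (by $\pi^{-k}$), a direct computation of conjugation gives, for $i<l$,
$$
\varphi(m_{li}) = \pi^k m_{li}, \quad \varphi(m_{il}) = \pi^{-k} m_{il}, \quad \varphi(n_{li}) = \pi^k n_{li}, \quad \varphi(q_{il}) = \pi^{-k} q_{il},
$$
while $\varphi$ fixes every remaining basis element. Note that, in contrast with the $\spt_{2l}$ case of Proposition \ref{pssspK}, there are no $n_{ll}$ or $q_{ll}$ vectors: the roots $\pm 2\varepsilon_l$ do not belong to $D_l$.

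Next, define $\Psi = \{\alpha \in \Phi :\, \alpha(m_{ll}) \geq 0\}$, so that
$$
\Phi - \Psi = \{\varepsilon_i - \varepsilon_l :\, i < l\} \cup \{-\varepsilon_i - \varepsilon_l :\, i < l\},
$$
a set of $2(l-1)$ roots. I would then write
$$
M = H \oplus \left[\bigoplus_{\alpha \in \Psi} R e_\alpha\right] \oplus \left[\bigoplus_{i < l} \left(R \pi^k e_{\varepsilon_i - \varepsilon_l} \oplus R \pi^k e_{-\varepsilon_i - \varepsilon_l}\right)\right],
$$
which exhibits $L/M$ as isomorphic to $(R/\pi^k R)^{\oplus 2(l-1)}$ as an $R$-module. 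It follows that the index of $M$ in $L$, and hence of $\pi^m M$ in $\pi^m L$, equals $q^{(2l-2)k}$, as claimed.

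For simplicity, I would invoke Lemma \ref{lkarinaK}, the hypothesis of which requires checking that $\bigcap_{\alpha \in \Phi - \Psi} \ker\alpha = \{0\}$. For $h = x_1 m_{11} + \cdots + x_l m_{ll} \in H$, the conditions $(\varepsilon_i - \varepsilon_l)(h) = 0$ for $i<l$ force $x_i = x_l$, while $(-\varepsilon_i - \varepsilon_l)(h) = 0$ force $x_i = -x_l$. Combining the two yields $2 x_l = 0$, hence $x_l = 0$ by characteristic zero, and then $x_i=0$ for all $i$. This step is the only genuine departure from the $C_l$ argument of Proposition \ref{pssspK}: there, the single root $-2\varepsilon_l$ did the job by itself, whereas here both families $\varepsilon_i - \varepsilon_l$ and $-\varepsilon_i - \varepsilon_l$ are needed simultaneously, crucially together with the fact that $2$ is not a zero divisor in $R$.

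Once the kernel condition is in hand, the conclusion runs exactly as in the previous propositions: by Remark \ref{rDinfty}, $D_\infty(\varphi_m) \subseteq D_\infty(\varphi_0) = H \oplus \bigoplus_{\alpha \in \Psi} R e_\alpha$, and by Lemma \ref{lkarinaK} no nonzero ideal $I$ of the $\bb{Z}_p$-Lie lattice $\pi^m L$ is contained in $D_\infty(\varphi_0)$. Hence no nonzero ideal of $\pi^m L$ is $\varphi_m$-invariant, proving that $\varphi_m$ is simple.
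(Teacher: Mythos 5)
Your proposal is correct and follows essentially the same route as the paper: the same action of $\varphi$ on the basis, the same choice of $\Psi$, the same description of $M$ and computation of the index, and the same application of Lemma \ref{lkarinaK} via Remark \ref{rDinfty}. Your verification that $\bigcap_{\alpha\in\Phi-\Psi}\ker\alpha=\{0\}$ (using both families $\varepsilon_i-\varepsilon_l$ and $-\varepsilon_i-\varepsilon_l$ to force $2x_l=0$) is just a slight rephrasing of the paper's case analysis on whether $x_l\neq 0$.
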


\begin{proof}
For $i < l$, we have
\begin{align*}
    \varphi(m_{li}) &= \pi^k m_{li}, &  \varphi(m_{il}) &= \pi^{-k} m_{il},\\
    \varphi(n_{li}) &= \pi^k n_{li}, & \varphi(q_{il}) &= \pi^{-k} q_{il},
\end{align*}
and $\varphi$ acts as the identity on the remaining basis elements of $L$.
We define 
$\Psi = \{\alpha \in \Phi :\, \alpha(m_{ll}) \geq 0\}$,
and we observe that
$\Phi-\Psi = 
\{\vep_i-\varepsilon_l:\, i < l\} 
\cup 
\{-\vep_i - \varepsilon_l: i < l\}
$.
Thus 
$$
M = 
H 
\oplus 
\left[\bigoplus_{\alpha\in \Psi} Re_{\alpha }\right]
\oplus 
\left[\bigoplus_{\alpha\in \Phi-\Psi} R\pi^ke_{\alpha }\right]
$$ 
so that the quotient $L/M$ is isomorphic to 
$\left[\bigoplus_{i<l}R/\pi^k R\right]^{\oplus 2}$
as an $R$-module.
Hence, the index of $M$ in $L$ is $q^{(2l-2)k}$,
and we note that the index of $\pi^m M$ in $\pi^m L$ has the same value. 
We have 
$$
D_\infty(\varphi_m)\subseteq D_\infty(\varphi_0)=
H 
\oplus 
\left[\bigoplus_{\alpha\in \Psi} Re_{\alpha }\right],
$$
see Remark \ref{rDinfty} for the notation,
and we claim that 
\[\bigcap_{\alpha \in \Phi-\Psi} \ker\alpha = \{0\}.\]
Indeed, 
suppose $h = x_1 m_{11} + \dots + x_l m_{ll} \in H$. 
If $x_l \neq 0$, then $\alpha := \varepsilon_1-\varepsilon_l$ and 
$\beta := -\varepsilon_1 - \varepsilon_l$ are in $\Phi -\Psi$ and 
$h \not\in \ker \alpha \cap \ker \beta$. 
Otherwise, if $x_l = 0$ and $x_i \neq 0$ for some $i < l$ 
then $\varepsilon_i-\varepsilon_l \in \Phi - \Psi$ and $h \not\in \ker (\varepsilon_i-\varepsilon_l )$.
The claim follows.

Finally, let $I$ be a nontrivial ideal of the $\bb{Z}_p$-Lie lattice $\pi^m L$. 
By Lemma \ref{lkarinaK}, $I\not\subseteq D_\infty(\varphi_0)$,
so that $I$ is not $\varphi_m$-invariant. 
This proves that $\varphi_m$ is simple.
\end{proof}

\subsection{Self-similarity of \texorpdfstring{$\so_{2l+1}(R)$}{so\_2l+1(R)}}
\label{ssooddK}

Part (\ref{tsscllieK3}) of Theorem \ref{tsscllieK} for $n$ odd is a consequence 
of Proposition \ref{psssooddK} proven below.
The context is the same one of Section \ref{ssllK}, but now
we fix an integer $l\geq 1$, and we let the indices $i,j$ take values $1\les i,j\les l$.
We write matrices in block form, where the blocks on the diagonal have sizes $1\times 1$,
$l\times l$, and $l\times l$.
For matrices of size $(2l+1)\times (2l+1)$, 
it is convenient to number the indices counting from $0$ and not $1$, as it was done implicitly
in the rest of the paper.
We denote by $e_i\in \mr{M}_{l\times 1}(R)$ the column vector with $i$-th entry $1$ and
the other entries $0$.
Let 
\begin{align*}
	s &= \begin{bmatrix}
    1 & 0 & 0\\
    0 & 0 & \Id_l\\
    0 & \Id_l & 0
\end{bmatrix},& 
    b_j &= \begin{bmatrix}
    0 & e_{j}^{\mr{t}} & 0\\
    0 & 0 & 0\\
    -e_{j} & 0 & 0
\end{bmatrix} , \\[5pt]
    c_i &= \begin{bmatrix}
    0 & 0 & -e_{i}^{\mr{t}}\\
    e_{i} & 0 & 0\\
    0 & 0 & 0
\end{bmatrix},&
    m_{ij} &= \begin{bmatrix}
    0 & 0 & 0\\
    0 & e_{ij} & 0\\
    0 & 0 & -e_{ji}
\end{bmatrix}, \\[5pt]
    n_{ij} &= \begin{bmatrix}
    0 & 0 & 0\\
    0 & 0 & e_{ij}-e_{ji}\\
    0 & 0 & 0
\end{bmatrix},&
    q_{ij} &= \begin{bmatrix}
    0 & 0 & 0\\
    0 & 0 & 0\\
    0 & e_{ij} - e_{ji} & 0
\end{bmatrix},
\end{align*}
and note that $n_{ij} = -n_{ji}$ and $q_{ij} = -q_{ji}$.
The $K$-Lie algebra 
$\so_{2l+1}(K) = \{x \in \gl_{2l+1}(K) :\, sx + x^{\mr{t}}s = 0\}$ 
consists of matrices in the form
\[\begin{bmatrix}
    0 & b & -c^{\mr{t}}\\
    c & m & n\\
    -b^{\mr{t}} & q & -m^{\mr{t}}
\end{bmatrix},\]
where $b \in \mr{M}_{1 \times l}(K)$, $c \in \mr{M}_{l \times 1}(K)$,
and $m,n,q \in \gl_l(K)$ with $n^{\mr{t}} = -n$ and $q^{\mr{t}} = -q$. 
The $R$-Lie lattice 
$L = \so_{2l+1}(R) = \{x \in \gl_{2l+1}(R) :\, sx + x^{\mr{t}}s = 0\}$ 
consists of matrices in the same form with $b,c,m,n,q$ having entries in $R$.
The matrices $b_i, c_i, m_{ij}$, and $n_{ji}, q_{ij}$ for $i< j$ constitute
a basis of $\so_{2l+1}(K)$ over $K$ and of $\so_{2l+1}(R)$ over $R$.
Observe that $L$ admits a decomposition such as the one described at 
the beginning of Section \ref{seclieclass} for the root system $\Phi = B_{l}$;
moreover, recall that the dual basis of $h_1,...,h_{l}$, where we take $h_i=m_{ii}$, 
is denoted by $\vep_1,...,\vep_{l}$. 
The root system at hand is 
\begin{gather*}
    \Phi = \{\varepsilon_i - \varepsilon_j :\,  i\neq j \} 
    \cup \{\pm(\vep_i + \varepsilon_j) :\,  i < j\}
    \cup \{\pm \vep_i\}, 
\end{gather*}
while the decomposition of $L$ reads:
$$
\begin{array}{rcll}
	H &=& \bigoplus_{i}R m_{ii},&\\[5pt]
    L_{\varepsilon_i} &=& R c_i,&\\[3pt]
    L_{-\varepsilon_j} &=& R b_j,&\\[3pt]
    L_{\varepsilon_i - \varepsilon_j} &=& R m_{ij} &\text{for}\ i \neq j, \\[3pt]
    L_{\varepsilon_i + \varepsilon_j} &=& R n_{ji} &\text{for}\ i < j,\\[3pt]
    L_{-\varepsilon_i - \varepsilon_j} &=& R q_{ij} &\text{for}\ i < j.
\end{array}
$$
Let $k\geq 1$, and
take $a = \diag(a_0, \dots, a_{2l})$ with 
\[a_i = \begin{cases}
    \pi^k & \text{if } i = l,\\
    \pi^{-k} & \text{if } i = 2l,\\
    1 & \text{otherwise.}
\end{cases}\]
Consider the $K$-Lie-algebra automorphism $\varphi(x) =axa^{-1}$ of $\so_{2l+1}(K)$.
Define $M = L \cap \varphi^{-1}(L)$,
and let $m\in\bb{N}$.
Then $\pi^m M$ is an $R$-subalgebra of $\pi^mL$ of finite index, and the restriction 
$$\varphi_m:\pi^m M \to \pi^mL$$ 
of $\varphi$ is a virtual endomorphism of $\pi^mL = \so_{2l+1}^m(R)$.

\begin{proposition}
\label{psssooddK}
In the above context,
$\varphi_m$ is a simple virtual endomorphism of index $q^{(2l-1)k}$ 
of the $\Z_p$-Lie lattice $\so_{2l+1}^m(R)$.
\end{proposition}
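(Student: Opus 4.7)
The plan is to mirror the arguments of Propositions~\ref{psssl}, \ref{pssspK}, and \ref{psssoevenK}, adapted to the root system $B_l$ and to the short roots $\pm\vep_i$ that arise from the additional basis elements $b_j$ and $c_i$. First, I would compute how $\varphi(x)=axa^{-1}$ acts on each basis element by multiplying the $(i,j)$-entry of a matrix by $a_i a_j^{-1}$. Because $a_l=\pi^k$, $a_{2l}=\pi^{-k}$, and the remaining entries of $a$ equal $1$, a quick inspection of the support of each basis matrix shows that, for $i<l$,
$$
\varphi(m_{li}) = \pi^k m_{li},\ \ \varphi(m_{il}) = \pi^{-k} m_{il},\ \ \varphi(n_{li}) = \pi^k n_{li},\ \ \varphi(q_{il}) = \pi^{-k} q_{il},
$$
together with $\varphi(c_l)=\pi^k c_l$ and $\varphi(b_l)=\pi^{-k}b_l$, while $\varphi$ fixes every other basis element of $L$.

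Setting $\Psi=\{\alpha\in\Phi:\alpha(m_{ll})\geq 0\}$, one then reads off
$$
\Phi-\Psi = \{\vep_i-\vep_l:i<l\} \cup \{-\vep_i-\vep_l:i<l\} \cup \{-\vep_l\},
$$
so $|\Phi-\Psi|=2l-1$. The presence of the short root $-\vep_l$ in $\Phi-\Psi$ is the essential new feature compared with the $D_l$ case, and it accounts for the exponent $(2l-1)k$ in the index. Writing
$$
M = H \oplus \bigoplus_{\alpha\in\Psi} L_\alpha \oplus \bigoplus_{\alpha\in\Phi-\Psi} \pi^k L_\alpha,
$$
one obtains $L/M\cong \bigoplus_{\alpha\in\Phi-\Psi} R/\pi^k R$ as $R$-modules, so $[L:M]=[\pi^m L:\pi^m M]=q^{(2l-1)k}$.

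For simplicity, I would verify the hypothesis $\bigcap_{\alpha\in\Phi-\Psi}\ker\alpha=\{0\}$ of Lemma~\ref{lkarinaK} by a direct argument: given $h=\sum_i x_i m_{ii}\in H$ nonzero, either $x_l\neq 0$, giving $(-\vep_l)(h)=-x_l\neq 0$, or $x_l=0$ and some $x_i\neq 0$ with $i<l$, giving $(\vep_i-\vep_l)(h)=x_i\neq 0$. Combining this with Remark~\ref{rDinfty}, one sees that no nontrivial ideal of $\pi^m L$ is contained in $D_\infty(\varphi_0)$, and therefore none is $\varphi_m$-invariant, so $\varphi_m$ is simple.

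The only real (and minor) bookkeeping point is to check that $b_l$ and $c_l$ remain eigenvectors of $\varphi$, which hinges on the fact that both nonzero entries of each of these matrices acquire the same scaling factor $\pi^{\mp k}$ under $\varphi$. Once this verification is made, the remainder of the proof runs exactly parallel to the three previous propositions.
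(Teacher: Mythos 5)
Your proposal is correct and follows essentially the same route as the paper's proof: the same eigenvalue computation for $\varphi$ on the basis (including the check that $b_l$ and $c_l$ scale by $\pi^{\mp k}$), the same choice $\Psi=\{\alpha:\alpha(m_{ll})\ges 0\}$ with $|\Phi-\Psi|=2l-1$, the same index computation, and the same verification of $\bigcap_{\alpha\in\Phi-\Psi}\ker\alpha=\{0\}$ feeding into Lemma~\ref{lkarinaK} via Remark~\ref{rDinfty}. No gaps.
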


\begin{proof}
For $i < l$, we have
\begin{align*}
    \varphi(m_{li}) &= \pi^k m_{li}, &  \varphi(m_{il}) &= \pi^{-k} m_{il},\\
    \varphi(n_{li}) &= \pi^k n_{li}, & \varphi(q_{il}) &= \pi^{-k} q_{il},\\
    \varphi(c_l) &= \pi^k c_l, & \varphi(b_l) &= \pi^{-k} b_l,
\end{align*}
and $\varphi$ acts as the identity on the remaining basis elements of $L$.
We define 
$\Psi = \{\alpha \in \Phi :\, \alpha(m_{ll}) \geq 0\}$,
and we observe that
$\Phi-\Psi = 
\{\vep_i-\varepsilon_l:\, i < l\} 
\cup 
\{-\vep_i - \varepsilon_l: i < l\}
\cup 
\{-\vep_l\}$.
Thus 
$$
M = 
H 
\oplus 
\left[\bigoplus_{\alpha\in \Psi} Re_{\alpha }\right]
\oplus 
\left[\bigoplus_{\alpha\in \Phi-\Psi} R\pi^ke_{\alpha }\right]
$$ 
so that the quotient $L/M$ is isomorphic to 
$\left[\bigoplus_{i<l}R/\pi^k R\right]^{\oplus 2}\oplus R/\pi^k R$
as an $R$-module.
Hence, the index of $M$ in $L$ is $q^{(2l-1)k}$,
and we note that the index of $\pi^m M$ in $\pi^m L$ has the same value. 
We have 
$$
D_\infty(\varphi_m)\subseteq D_\infty(\varphi_0)=
H 
\oplus 
\left[\bigoplus_{\alpha\in \Psi} Re_{\alpha }\right],
$$
see Remark \ref{rDinfty} for the notation,
and we claim that 
\[\bigcap_{\alpha \in \Phi-\Psi} \ker\alpha = \{0\}.\]
Indeed, 
suppose $h = x_1 m_{11} + \dots + x_l m_{ll} \in H$. 
If $x_l \neq 0$, then $-\varepsilon_l \in \Phi -\Psi$ and 
$h \not\in \ker (-\varepsilon_l)$. 
Now, if $x_l = 0$ and $x_i \neq 0$ for some $i < l$
then $\varepsilon_i-\varepsilon_l \in \Phi - \Psi$ and $h \not\in \ker (\varepsilon_i-\varepsilon_l)$.
The claim follows.

Finally, let $I$ be a nontrivial ideal of the $\bb{Z}_p$-Lie lattice $\pi^m L$. 
By Lemma \ref{lkarinaK}, $I\not\subseteq D_\infty(\varphi_0)$,
so that $I$ is not $\varphi_m$-invariant. 
This proves that $\varphi_m$ is simple.
\end{proof}


\section{Proof of Theorem \ref{tssclgpK} and Theorem \ref{tssclgpK2}}
\label{secpfA}

We realize the classical groups as follows:
$$
\SL_n(K)=
\{x\in\mr{M}_n(K):\, \mr{det}(x)=1\};
$$
$$
\Sp_n(K)=
\{x\in\mr{M}_n(K):\, x^\mr{t} s x = s \},
$$
where $n$ is even and $s$ is the matrix defined
in Section \ref{sspK}; and 
$$
\SO_n(K)
=
\{x\in\SL_n(K):\, x^\mr{t} sx = s\},
$$
where $s$ is the matrix defined
in Section \ref{ssoevenK} or in Section \ref{ssooddK}, depending on the parity of $n$. 
For $\Sigma$ one of the symbols $\SL$, $\Sp$, and $\SO$, we define 
$\Sigma_n(R)=\Sigma_n(K)\cap \mr{M}_n(R)$.
By using the notation introduced above Theorem \ref{tssclgpK2},
we observe that 
the dimensions of the groups involved,
as $p$-adic analytic groups, are as follows:
$$
\begin{array}{lcccl}
\mr{dim}\,\SL_n(K) 
&=& 
(n^2-1)d
&=& 
(l^2+2l)d,
\\[7pt]
\mr{dim}\,\Sp_n(K) 
&=& 
\dfrac{(n^2+n)d}{2}
&=& 
(2l^2+l)d,
\\[10pt]
\mr{dim}\,\SO_n(K) 
&=& 
\dfrac{(n^2-n)d}{2}
&=& 
\left\{
\begin{array}{ll}
(2l^2-l)d
&\mbox{if }n\mbox{ is even},
\\[5pt]
(2l^2+l)d
&\mbox{if }n\mbox{ is odd}.
\end{array}
\right. 
\end{array} 
$$
The various subgroups of these groups considered in this paper, and their associated Lie algebras,
have the same dimensions 
(for the Lie algebras, we are talking about the dimension over $\bb{Q}_p$ or $\bb{Z}_p$,
depending on the case, and not over $K$ or $R$).

\bigskip 

We proceed with the proof of Theorem \ref{tssclgpK} by starting with a lemma.
The context is the one of the theorem, and recall that $\theta$ is defined above Theorem \ref{tsscllieK}.

\begin{lem}
Let $L$ be an $R$-Lie lattice,
and let $m\ges \theta e$. 
Then $\pi^m L$ is a powerful $\bb{Z}_p$-Lie lattice.
\end{lem}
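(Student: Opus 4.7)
The plan is to verify that $\pi^m L$ satisfies all the conditions of being a powerful $\mathbb{Z}_p$-Lie lattice: finitely generated and free as a $\mathbb{Z}_p$-module, closed under the Lie bracket, and satisfying the powerfulness condition $[\pi^m L,\pi^m L]\subseteq p^\theta \pi^m L$ (which reads $[M,M]\subseteq pM$ for $p$ odd and $[M,M]\subseteq 4M$ for $p=2$).

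The underlying module-theoretic properties are immediate: as an $R$-Lie lattice, $L$ is a finitely generated free $R$-module, hence, via restriction of scalars, a finitely generated free $\mathbb{Z}_p$-module of rank $d\cdot\mr{rank}_R(L)$. The subset $\pi^m L$ is a $\mathbb{Z}_p$-submodule of finite index in $L$; being finitely generated and torsion-free over the PID $\mathbb{Z}_p$, it is free. Moreover, for $x,y\in L$ one has $[\pi^m x,\pi^m y]=\pi^{2m}[x,y]\in \pi^{2m}L\subseteq\pi^m L$, so $\pi^m L$ inherits a $\mathbb{Z}_p$-Lie algebra structure from $L$ and is indeed a $\mathbb{Z}_p$-Lie lattice.

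The key point is the powerfulness condition. The crucial input is that $p$ and $\pi^e$ generate the same ideal of $R$, so $p=u\pi^e$ for some unit $u\in R^\times$, and consequently
$$
p^\theta \pi^m L \;=\; u^\theta \pi^{\theta e+m}L \;=\; \pi^{\theta e+m}L,
$$
where the last equality uses that $u^\theta\in R^\times$ acts invertibly on $L$. Combining this with the inclusion $[\pi^m L,\pi^m L]\subseteq \pi^{2m}L$ noted above, the powerfulness reduces to the inclusion $\pi^{2m}L\subseteq \pi^{\theta e+m}L$, which holds precisely when $2m\geq \theta e+m$, that is, when $m\geq \theta e$. This is the standing hypothesis, so the condition is satisfied.

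There is no real obstacle in this argument: the whole statement follows from the simple inequality $2m\geq \theta e+m$ once one translates between the $R$-valuation (powers of $\pi$) and the $\mathbb{Z}_p$-valuation (powers of $p$) via $pR=\pi^e R$. If anything, the only point that deserves a moment of care is making sure that multiplication by $\pi^{2m}$ lands inside $p^\theta\pi^m L$ rather than just $p\pi^m L$ when $p=2$; this is exactly why the definition of $\theta$ carries the factor $2$ in the dyadic case, so that $p^\theta=4$ is hit by $\pi^{2e}$.
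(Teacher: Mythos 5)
Your proof is correct and follows essentially the same route as the paper's: both write $p$ and $\pi^e$ as unit multiples of one another, use $[\pi^m L,\pi^m L]\subseteq \pi^{2m}L$, and reduce powerfulness to the inequality $2m\ges \theta e+m$. The extra verification that $\pi^m L$ is a free $\Z_p$-module closed under the bracket is fine but is taken for granted in the paper.
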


\begin{proof}
There exists $u\in R^*$ such that $\pi^e = up$;
as a consequence, $\pi^m= \pi^{m-\theta e}u^\theta p^\theta$.
We have 
$[\pi^mL,\pi^m L] \subseteq \pi^m \pi^m L= 
p^\theta(\pi^{m-\theta e}u^\theta\pi^mL)\subseteq p^\theta(\pi^mL)$.
\end{proof}

\bigskip 
\noindent 
Since $\sigma_n^m(R)=\pi^m \sigma_n(R)$, $\sigma_n^m(R)$ is a powerful
$\bb{Z}_p$-Lie lattice for $m\ges \theta e$
(and for the values of $n$ for which $\sigma_n(R)$ is defined). 
The associated uniform pro-$p$ group may be realized through the exponential
function, as we recall. We define
$$
\epsilon = \left\lfloor \frac{e}{p-1}\right\rfloor + 1,
$$
and we observe that $\theta e\ges \epsilon$.
For $m\ges \epsilon$, the exponential and logarithm functions
give rise to mutually inverse analytic functions 
$$
\mr{exp}:
\pi^m\mr{M}_n(R)
\map 
\Id_n+\pi^m\mr{M}_n(R)
\qquad\qquad  
\mr{log}:
\Id_n+\pi^m\mr{M}_n(R)
\map 
\pi^m\mr{M}_n(R).
$$
For $m\ges e\theta$, 
$\mr{exp}(\sll_n^m(R))= \SL_n^m(R)$,
$\mr{exp}(\spt_n^m(R))= \Sp_n^m(R)$,
and
$\mr{exp}(\so_n^m(R))= \SO_n^m(R)$
are uniform pro-$p$ groups.
We recall that Proposition A of \cite{NS2019} implies that if $L$
is a powerful $\bb{Z}_p$-Lie lattice, 
$G$ is its associated uniform pro-$p$ group, and $\mr{dim}\, G\les p$ 
then $G$ is self-similar of index $p^k$ if and only if
$L$ is self-similar of index $p^k$.
Observe that, under the assumptions of the theorems that we are proving in this section,
the condition $\mr{dim}\, G\les p$ implies $p\ges 3$, hence, $\theta = 1$. 
Now, theorem \ref{tssclgpK} follows from Theorem \ref{tsscllieK} and the above considerations.

\bigskip 

Next, we prove Theorem \ref{tssclgpK2}.
Since Theorem \ref{tssclgpK} 
gives a list of self-similarity indices for $\Sigma_n^{m}(R)$ for $m\ges e$,
an analogous list for $\Sigma_n(R)$
depends on the value of 
$[\Sigma_n(R):\Sigma_n^{m}(R)]$,
by Corollary \ref{cssclgpK}.
We first argue for $\SL$ and $\Sp$, 
where we make considerations that hold for $m\ges 1$.
The function 
$\Sigma_n(R)\rightarrow \Sigma_n(R/\pi^mR)$
of reduction modulo $\pi^m$
is a surjective group morphism with kernel $\Sigma_n^m(R)$,
so that 
$
[\Sigma_n(R):\Sigma_n^m(R)]=|\Sigma_n(R/\pi^mR)|
$;
see 
\cite[Theorem VII.6 and Theorem VII.20]{NewIM}.
The value of $|\Sigma_n(R/\pi^mR)|$
is computed in \cite[Theorem VII.15 and Theorem VII.27]{NewIM},
from which the parts of 
Theorem \ref{tssclgpK2} relative to $\SL$ and $\Sp$ follow.
On the other hand, the parts of Theorem \ref{tssclgpK2} relative to $\SO$
follow from Theorem \ref{tindSO} and Remark \ref{rindsodvr}.


\section{Indices of congruence subgroups of orthogonal groups}
\label{secindSO}

The proof of Theorem \ref{tindSO} will be given at the end of the section after the necessary
preliminaries.
The results given below, including the proof of the theorem, were inspired by and partially proved in
\cite{MSlondrina}. 
For the ease of the reader, we make complete proofs of our statements.

\begin{remark}
\label{rindsodvr}
The ring of integers of a $p$-adic field, with $p\ges 3$, satisfies
the assumptions of Theorem \ref{tindSO}.
\end{remark}

\medskip 

We start in a more general context than in the theorem. 
Let $R$ be an associative commutative ring with unit, $n\ges 1$ 
be an integer, and $l = \lfloor \frac{n}{2}\rfloor$. 
Observe that if $n$ is odd then $l\ges 0$ and $n=2l+1$, while 
if $n$ is even then $n=2l$ and $l\ges 1$.
We define 
\begin{eqnarray*}
\mr{O}_{n}(R) 
&=& 
\{M \in \mr{M}_n(R) :\,  M^\mr{t}SM = S\},
\\[2mm]
\mr{SO}_{n}(R) 
&=& 
\{M \in \mr{O}_{n}(R) :\, \det(M) = 1\},
\end{eqnarray*}
where
\[S = \begin{bmatrix}
0 & \Id_l\\
\Id_l & 0
\end{bmatrix}\]
if $n$ is even, and 
\[S = \begin{bmatrix}
0 & \Id_l & 0\\
\Id_l & 0 & 0\\
0 & 0 & 1
\end{bmatrix}\]
if $n$ is odd
(for notational convenience, when $n$ is odd we made here a different choice than in Section \ref{secpfA}
regarding the definition of $\SO_n$; it is clear that the two choices give rise to isomorphic groups).
Then $\mr{O}_{n}(R)$ is a group under matrix multiplication, $\SO_n(R)$ is a subgroup of $\mr{O}_n(R)$, and 
$S \in \mr{O}_n(R)$; observe that $S^2 = \mr{Id}$ and $\mr{det}(S)=(-1)^l$. 
Throughout this section, the elements of $R^n$ will be presented as $n \times 1$ matrices. 
The canonical basis of $R^n$ is denoted by $e_1, \dots, e_n$ as usual,
and we use the same notation, for instance, for the canonical basis
of $(R/I)^n$, where $I$ is an ideal of $R$.

\bigskip 

The following lemma is a straightforward computation.

\begin{lem}\label{conditions for orthogonal matrixes}
Suppose $l \geq 1$, and 
let $A$, $B$, $C$, $D$, $U$, $V$, $W$ be matrices in 
$\mr{M}_{l \times l}(R)$, $u,v \in R^l$, $x,y \in M_{1 \times l}(R)$ and $\omega \in R$. 
Then the following holds.

\begin{enumerate}
	\item
	\begin{enumerate}
		\item $
		\begin{bmatrix}
			A & B\\
			C & D
		\end{bmatrix}
		\in \mr{O}_{2l}(R) \iff 
		\left\{\begin{aligned}
			&A^\mr{t}C + C^\mr{t}A = 0,\\
			&A^\mr{t}D + C^\mr{t}B = \Id_l,\\
			&B^\mr{t}D + D^\mr{t}B = 0.
		\end{aligned}\right.$ \label{general condition n = 2l}
		\item $
		\begin{bmatrix}
			U & 0\\
			0 & V
		\end{bmatrix}
		\in \mr{O}_{2l}(R) \iff U^\mr{t} = V^{-1}$.
		\item $
		\begin{bmatrix}
			\Id_l & 0\\
			W & \Id_l
		\end{bmatrix}
		\in \mr{O}_{2l}(R) \iff W + W^\mr{t} = 0$.
	\end{enumerate}
	
	\item
	\begin{enumerate}
		\item $
		\begin{bmatrix}
			A & B & u\\
			C & D & v\\
			x & y & \omega
		\end{bmatrix}
		\in \mr{O}_{2l+1}(R) \iff 
		\left\{\begin{aligned}
			&A^\mr{t}C + C^\mr{t}A + x^\mr{t}x = 0,\\
			&A^\mr{t}D + C^\mr{t}B + x^\mr{t}y = \Id_l,\\
			&B^\mr{t}D + D^\mr{t}B + y^\mr{t}y = 0,\\
			&A^\mr{t}v + C^\mr{t}u + \omega x^\mr{t} = 0,\\
			&B^\mr{t}v + D^\mr{t}u + \omega y^\mr{t} = 0,\\
			&u^\mr{t}v + v^\mr{t}u + \omega^2 = 1.
		\end{aligned}\right.$ \label{condition 1. n = 2l+1}
		\item $
		\begin{bmatrix}
			U & 0 & 0\\
			0 & V & 0\\
			0 & 0 & \omega
		\end{bmatrix}
		\in \mr{O}_{2l+1}(R) \iff  		 
		\left\{\begin{aligned}
			&U^\mr{t} = V^{-1},\\
			&\omega^2 = 1.
		\end{aligned}\right.$
		\item $
		\begin{bmatrix}
			\Id_l & 0 & 0\\
			W & \Id_l & v\\
			x & 0 & \omega
		\end{bmatrix}
		\in \mr{O}_{2l+1}(R) \iff 
		\left\{\begin{aligned}
			&W + W^\mr{t} = - x^\mr{t}x,\\
			&v = -\omega x^\mr{t},\\
			&\omega^2 = 1.
		\end{aligned}\right.$
	\end{enumerate}
\end{enumerate}
\end{lem}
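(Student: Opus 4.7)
The approach is entirely direct: expand the defining condition $M^{\mr{t}} S M = S$ in block form and compare corresponding entries. The pattern is the same across all six sub-claims, so I would organize the work around parts (1)(a) and (2)(a), with the block-diagonal and unipotent cases (parts (b) and (c)) following from these by specialization.

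For part (1)(a), writing $M = \begin{bmatrix} A & B \\ C & D \end{bmatrix}$, a two-step block matrix multiplication yields
\[
M^{\mr{t}} S M = \begin{bmatrix} A^{\mr{t}} C + C^{\mr{t}} A & A^{\mr{t}} D + C^{\mr{t}} B \\ B^{\mr{t}} C + D^{\mr{t}} A & B^{\mr{t}} D + D^{\mr{t}} B \end{bmatrix}.
\]
Setting this equal to $S = \begin{bmatrix} 0 & \Id_l \\ \Id_l & 0 \end{bmatrix}$ and observing that the bottom-left block is just the transpose of the top-right (hence contributes no new condition) produces exactly the three displayed equations. Part (1)(b) then follows by substituting $A = U$, $D = V$, $B = C = 0$: the first and third conditions become trivial, and the second reduces to $U^{\mr{t}} V = \Id_l$, equivalently $U^{\mr{t}} = V^{-1}$. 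Part (1)(c) follows by substituting $A = D = \Id_l$, $B = 0$, $C = W$: only the first condition remains nontrivial, giving $W + W^{\mr{t}} = 0$.

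Part (2) proceeds in exactly the same way but with $3 \times 3$ block matrices and the odd-dimensional $S$. The computation is longer but introduces no new ideas: expanding $M^{\mr{t}} S M$ yields six independent blocks (the other three being transposes of the ones listed), matching in order the six equations in (2)(a). The specializations (2)(b) and (2)(c) are again obtained by direct substitution; for instance, in (2)(c) one sets $A = D = \Id_l$, $B = 0$, $C = W$, $u = 0$, $y = 0$, and the six conditions collapse to $W + W^{\mr{t}} = -x^{\mr{t}} x$, $v = -\omega x^{\mr{t}}$, and $\omega^2 = 1$, with the remaining equations becoming automatic.

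There is no substantive obstacle: the entire lemma is bookkeeping for a block matrix product. The only point requiring minor care is to note, in each general case, the symmetry $(M^{\mr{t}} S M)^{\mr{t}} = M^{\mr{t}} S^{\mr{t}} M = M^{\mr{t}} S M$ that makes half of the block equations redundant, so that the lists of conditions recorded in (1)(a) and (2)(a) are complete and independent.
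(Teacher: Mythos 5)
Your proposal is correct and matches the paper's treatment: the paper offers no written proof, introducing the lemma only with the remark that it is ``a straightforward computation,'' and your direct block expansion of $M^{\mr{t}}SM=S$, using the symmetry of $M^{\mr{t}}SM$ to discard the redundant blocks and then specializing for the (b) and (c) cases, is exactly that computation carried out correctly.
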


\begin{lem}
\label{llocalnot2}
Assume that $R$ is local and with residue field of characteristic different from 2.
Then the following holds.
\begin{enumerate}
	\item 
	\label{llocalnot2.1}
	Assume that $a\in R$ and $a^2 = 1$. Then $a=1$ or $a=-1$.
	\item 
	\label{llocalnot2.2}
	$[\mr{O}_{n}(R) : \mr{SO}_{n}(R)] = 2$.
	\item 
	\label{llocalnot2.3}
	$\mr{O}_1(R) = \{1,-1\}$.
	\item 
	\label{llocalnot2.4}
	$\mr{O}_2(R) = \left\{ \begin{bmatrix} \alpha & 0\\ 0 & \alpha^{-1}\end{bmatrix} :\, \alpha \in R^*\right\} \cup \left\{\begin{bmatrix} 0 & \alpha^{-1}\\ \alpha & 0 \end{bmatrix} :\, \alpha \in R^*\right\}$.
\end{enumerate}
\end{lem}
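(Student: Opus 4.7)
The plan is to handle the four items sequentially, exploiting throughout that $2\in R^*$ (since the residue field has characteristic different from $2$) and the local structure of $R$. For part (\ref{llocalnot2.1}), factor $a^2-1=(a-1)(a+1)=0$; because $(a+1)-(a-1)=2\in R^*$, the two factors cannot both lie in the maximal ideal, so one of them is a unit and the other must vanish, yielding $a\in\{1,-1\}$.

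For part (\ref{llocalnot2.2}), taking determinants in $M^\mr{t}SM=S$ gives $\det(M)^2\det(S)=\det(S)$, and since $\det(S)=\pm 1\in R^*$ this simplifies to $\det(M)^2=1$. Part (\ref{llocalnot2.1}) then shows that $\det$ restricts to a group homomorphism $\mr{O}_n(R)\to\{1,-1\}$ with kernel $\mr{SO}_n(R)$; to conclude that the index equals $2$ it suffices to exhibit an element of determinant $-1$. For $n$ odd I would take $\mr{diag}(1,\ldots,1,-1)$, orthogonality being verified via Lemma \ref{conditions for orthogonal matrixes}(\ref{condition 1. n = 2l+1}); for $n=2l$ even, the permutation matrix $P$ that transposes $e_l$ with $e_{2l}$ and fixes the remaining basis vectors satisfies $P^\mr{t}SP=S$ by direct check and has $\det(P)=-1$.

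Part (\ref{llocalnot2.3}) is immediate: for $n=1$ one has $S=[1]$, so $M^\mr{t}SM=S$ reduces to $a^2=1$ and part (\ref{llocalnot2.1}) applies. For part (\ref{llocalnot2.4}), apply Lemma \ref{conditions for orthogonal matrixes}(\ref{general condition n = 2l}) with $l=1$ to a matrix $M$ with entries $a,b,c,d$: the orthogonality relations become $2ac=0$, $ad+bc=1$, $2bd=0$, which, using $2\in R^*$, reduce to $ac=bd=0$ and $ad+bc=1$. The key step, and the place where locality is essential, is a dichotomy on $a$: if $a\in R^*$ then $c=0$, $d=a^{-1}$, and $b=0$, producing the diagonal form; if instead $a$ lies in the maximal ideal, then so does $ad$, hence $bc=1-ad\in R^*$, which forces $b,c\in R^*$ and then $a=d=0$, producing the antidiagonal form with $c=b^{-1}$. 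No serious obstacle arises anywhere; the proof is essentially routine once the role of $2\in R^*$ and the units-versus-maximal-ideal dichotomy are isolated.
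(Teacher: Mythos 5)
Your proof is correct and follows essentially the same route as the paper: the same unit-difference argument for $a^2=1$, the determinant homomorphism plus an explicit determinant $-1$ witness for the index computation, and the same orthogonality relations $ac=bd=0$, $ad+bc=1$ for $\mr{O}_2$ (the paper splits on whether an entry vanishes rather than on whether $a$ is a unit, and uses a different but equally valid transposition for even $n$, but these are cosmetic differences).
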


\begin{proof}
Observe that $2\in R^*$.

(\ref{llocalnot2.1})
Assume by contradiction that $a\neq \pm 1$. 
Then the condition $(a-1)(a+1)=0$ implies
$a-1,a+1\not\in R^*$, so that $2 = (a+1)-(a-1)\not\in R^*$, since $R$ is local, a contradiction.

(\ref{llocalnot2.2})
It is sufficient to show that $\det(\mr{O}_{n}(R)) = \{1,-1\}$. 
The inclusion `$\subseteq$' follows from item (\ref{llocalnot2.1}). 
To prove the inclusion `$\supseteq$', we exhibit a matrix $M \in \mr{O}_{n}(R)$ such that $\det(M) = -1$: 
\[M = \begin{bmatrix}
0 & 0 & 1 & 0\\
0 & \Id_{l-1} & 0 & 0\\
1 & 0 & 0 & 0\\
0 & 0 & 0 & \Id_{l-1}
\end{bmatrix}\]
for $n$ even and 
\[M = \begin{bmatrix}
\Id_{2l} & 0\\
0 & -1
\end{bmatrix}\]
for $n$ odd.

(\ref{llocalnot2.3})
Since $\mr{O}_1(R)=\{a\in R:\, a^2 =1\}$,
the item follows from item (\ref{llocalnot2.1}).

(\ref{llocalnot2.4}) 
A matrix 
$
\begin{bmatrix} a & b\\ c & d\end{bmatrix}
\in \mr{M}_2(R)$ is in $\mr{O}_2(R)$ if and only if $ad+bc=1$, $ac =0$, and $bd = 0$.  
The inclusion `$\supseteq$' is readily verified.
For `$\subseteq$' one may argue as follows. If one of the entries is zero, it is easy to 
see that the matrix has one of the two forms specified in the statement. 
If none of the entries is zero we get a contradiction by observing that none of them may be invertible,
so that $1$ would be in the maximal ideal of $R$. 
\end{proof}

\bigskip 

For a vector $x \in R^n$, we denote by $(x)$ the ideal of $R$ generated by the entries of $x$.
We leave the proof of the next lemma to the reader.

\begin{lem}
\label{lem2}
Let $x \in R^n$, 
and consider the following conditions.
\begin{enumerate}
\item
\label{lem2.1} 
$(x) = R$.
\item 
\label{lem2.2} 
$\exists i\in \{1,...,n\}$ such that $x_i\in R^*$.
\item 
\label{lem2.3} 
$\exists M\in \GL_n(R)$ such that $Me_1=x$.
\end{enumerate}
Then (\ref{lem2.2}) $\Rightarrow$ (\ref{lem2.3}) $\Rightarrow$ (\ref{lem2.1}).
Moreover, if $R$ is local then the three conditions are equivalent.
\end{lem}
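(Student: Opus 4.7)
The plan is to prove the two implications (\ref{lem2.2})$\Rightarrow$(\ref{lem2.3})$\Rightarrow$(\ref{lem2.1}) in full generality, and then close the circle under the local hypothesis by showing (\ref{lem2.1})$\Rightarrow$(\ref{lem2.2}). Each step is short and essentially algorithmic; the only real choice is the construction of the matrix $M$ in the first implication.

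For (\ref{lem2.2})$\Rightarrow$(\ref{lem2.3}), suppose that $x_i \in R^*$ for some index $i$. I would build $M$ explicitly by declaring its first column to be $x$ and filling its remaining columns with the standard basis vectors $e_j$ for $j \in \{1,\dots,n\}\setminus\{i\}$, arranged in any fixed order. Expanding $\det(M)$ by cofactors along the columns $2,\dots,n$ (each of which contains a single $1$), one reduces to a $1\times 1$ minor equal to $[x_i]$, so that $\det(M) = \pm x_i \in R^*$. Hence $M \in \GL_n(R)$, and by construction $Me_1 = x$.

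For (\ref{lem2.3})$\Rightarrow$(\ref{lem2.1}), the key observation is that if $Me_1 = x$ with $M \in \GL_n(R)$, then multiplying by $M^{-1}$ gives $M^{-1}x = e_1$. Reading off the first coordinate yields
\[
1 \;=\; \sum_{j=1}^{n} (M^{-1})_{1j}\, x_j,
\]
which exhibits $1$ as an $R$-linear combination of the entries of $x$. Therefore $1 \in (x)$ and $(x) = R$.

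Finally, assume $R$ is local with maximal ideal $\mathfrak{m}$, and prove (\ref{lem2.1})$\Rightarrow$(\ref{lem2.2}) by contraposition: if no $x_j$ is a unit, then every $x_j \in \mathfrak{m}$, so $(x) \subseteq \mathfrak{m} \subsetneq R$. I do not expect any genuine obstacle in the argument; the only slightly computational point is the cofactor expansion in (\ref{lem2.2})$\Rightarrow$(\ref{lem2.3}), but it is entirely routine.
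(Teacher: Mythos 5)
Your proof is correct; the paper itself leaves this lemma to the reader, and your argument (explicit unit-determinant matrix with first column $x$ for (\ref{lem2.2})$\Rightarrow$(\ref{lem2.3}), reading off the first row of $M^{-1}x=e_1$ for (\ref{lem2.3})$\Rightarrow$(\ref{lem2.1}), and the contrapositive via the maximal ideal in the local case) is exactly the standard one intended.
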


\begin{remark}
\label{cons}
We make a construction that will be used in a couple of lemmas below.
We assume that $n\ges 2$, so that $l\ges 1$. 
Let $x\in R^n$ be such that $x^\mr{t}Sx = 0$,
and let $U\in \GL_l(R)$ be such that 
$$
Ue_1 = \begin{bmatrix}
x_1\\
\vdots\\
x_l
\end{bmatrix}.
$$ 
Such a $U$ may not exists (cf. Lemma \ref{lem2}), but in case it exists,
as it is assumed here, we will construct $M\in\mr{O}_n(R)$
such that $Me_1 = x$.
Define 
$$
y = U^\mr{t}\begin{bmatrix}
x_{l+1}\\
\vdots\\
x_{2l}
\end{bmatrix}.
$$
Observe that $y_1 = \sum_{i=1}^l x_i x_{l+i}$, so that
\[0 = x^\mr{t}Sx = 
\begin{cases} 
2y_1 & \text{if } n = 2l,\\
2y_1 + x_{n}^2 & \text{if } n = 2l+1.
\end{cases}\] 
We define a matrix in $M_l(R)$ by
$$
W = 
\begin{bmatrix}
y_1 & -y_2 & \dots & -y_l\\
y_2 & 0 & \dots & 0 \\
\dots & \dots & \dots & \dots\\
y_l & 0 &\dots & 0
\end{bmatrix},
$$
and observe that $We_1 = y$ and 
\[
W + W^\mr{t} = 
\begin{cases} 
0 & \text{if } n = 2l,\\
-x_n^2e_1e_1^\mr{t} & \text{if } n = 2l+1.
\end{cases}
\]
Define
\[M = \begin{bmatrix}
U & 0\\
0 & U^{-1,\mr{t}}
\end{bmatrix}
\begin{bmatrix}
\Id_l & 0\\
W & \Id_l
\end{bmatrix}\]
for $n$ even, and 
\[M = \begin{bmatrix}
U & 0 & 0\\
0 & U^{-1,\mr{t}} & 0\\
0 & 0 & 1
\end{bmatrix}
\begin{bmatrix}
\Id_l & 0 & 0\\
W & \Id_l & -x_ne_1\\
x_ne_1^\mr{t} & 0 & 1
\end{bmatrix}\]
for $n$ odd. 
The reader may check that $M$ satisfies the desired properties.
\end{remark}

\begin{lem}
\label{prop3}
Let $n \geq 2$ and $x \in R^{n}$,
and consider the following conditions.
\begin{enumerate}
\item
\label{prop3.1} 
$(x) = R$ and $x^\mr{t}Sx = 0$.
\item 
\label{prop3.2} 
$\exists M\in \mr{O}_n(R)$ such that $Me_1=x$.
\end{enumerate}
Then (\ref{prop3.2}) $\Rightarrow$ (\ref{prop3.1}).
Moreover, if $R$ is local then the two conditions are equivalent.
\end{lem}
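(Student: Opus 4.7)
My plan is to handle (\ref{prop3.2}) $\Rightarrow$ (\ref{prop3.1}) directly, and for the converse under the local hypothesis to reduce to the construction of Remark \ref{cons} via the symmetry $S$.

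For (\ref{prop3.2}) $\Rightarrow$ (\ref{prop3.1}), I would argue as follows: if $M\in\mr{O}_n(R)$ satisfies $Me_1=x$, then taking determinants in $M^\mr{t}SM=S$ gives $\det(M)^2=1$, so $M\in\GL_n(R)$; Lemma \ref{lem2} then yields $(x)=R$, and $x^\mr{t}Sx=e_1^\mr{t}M^\mr{t}SMe_1=e_1^\mr{t}Se_1=0$, since the $(1,1)$-entry of $S$ vanishes for both parities of $n$.

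For the converse, assume $R$ is local with maximal ideal $\mfr{m}$. The key observation is that Remark \ref{cons} already produces the desired $M\in\mr{O}_n(R)$ whenever there exists $U\in\GL_l(R)$ with $Ue_1=(x_1,\ldots,x_l)^\mr{t}$; by Lemma \ref{lem2}, such a $U$ exists if and only if some $x_i$ with $i\les l$ is a unit. If this already holds, Remark \ref{cons} finishes the proof. Otherwise $x_1,\ldots,x_l\in\mfr{m}$, and I plan to show that some $x_j$ with $l+1\les j\les 2l$ must nonetheless be a unit: for $n=2l$ this is automatic since $(x)=R$; for $n=2l+1$ the only remaining possibility would be that $x_{2l+1}$ is the unique unit entry, but then $x_{2l+1}^2\in R^*$, while the identity
$$
0=x^\mr{t}Sx=2\sum_{i=1}^{l}x_ix_{l+i}+x_{2l+1}^2
$$
would force $x_{2l+1}^2\in\mfr{m}$, a contradiction. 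Having located a unit in the middle block, I would then transport it into the first block via $S$ itself: since $S=S^\mr{t}$ and $S^2=\Id$, one has $S\in\mr{O}_n(R)$, and $x':=Sx$ satisfies $x'^\mr{t}Sx'=x^\mr{t}S^3x=x^\mr{t}Sx=0$, while $(x'_1,\ldots,x'_l)=(x_{l+1},\ldots,x_{2l})$ now contains a unit. Remark \ref{cons} applied to $x'$ produces $M'\in\mr{O}_n(R)$ with $M'e_1=x'$, and then $M:=SM'$ satisfies $Me_1=x$, as required.

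The main subtle point will be ruling out the exceptional odd-dimensional configuration in which the unique unit entry of $x$ might a priori be $x_{2l+1}$; this is precisely where the quadratic constraint $x^\mr{t}Sx=0$ is used essentially, beyond its role as input to Remark \ref{cons}.
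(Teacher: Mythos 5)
Your proposal is correct and follows essentially the same route as the paper: the forward implication is the same computation (the paper also invokes Lemma \ref{lem2}, leaving the invertibility of $M$ implicit where you spell it out via determinants), and the converse is the same reduction to Remark \ref{cons}, including the use of the quadratic relation to rule out the case where the only unit entry is $x_{2l+1}$ and the trick of passing to $Sx$ to move a unit from the middle block into the first block.
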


\begin{proof}
We first assume (\ref{prop3.2}) and prove (\ref{prop3.1}).
The condition $(x)=R$ follows from Lemma \ref{lem2}.
Moreover, $x^\mr{t}Sx = e_1^\mr{t}M^\mr{t}SMe_1 = e_1^\mr{t}Se_1 = 0$, 
the last equality being true since $n\ges 2$.

We now assume (\ref{prop3.1}) and that $R$ is local. 
Moreover, observe that $l\ges 1$.
By definition of $S$, 
\[x^\mr{t}Sx = 
\begin{cases} 
\sum_{i=1}^l 2 x_i x_{l+i} & \text{if } n = 2l,\\[3pt]
\sum_{i=1}^l 2 x_i x_{l+i} + x_{n}^2 & \text{if } n = 2l+1.
\end{cases}\]
By Lemma \ref{lem2}, there is some $ 1\leq i_0 \leq n$ such that $x_{i_0} \in R^*$. 
We claim we can take $1 \leq i_0 \leq 2l$. In fact, for $n = 2l+1$ 
the equation $x^\mr{t}Sx = 0$ implies $(x_n^2) \subseteq (x_1,\dots, x_l)$, 
thus if $x_n \in R^*$ then $(x_1,...,x_l)= R$, and we can take $i_0 \leq l$
by the same lemma. 
In case $i_0 \leq l$ one can apply Lemma \ref{lem2} (with $l$ in place of $n$) 
and Remark \ref{cons} to get the desired $M$.
Finally, we  treat the missing case, $l +1 \leq i_0 \leq 2l$, 
by applying the previous one to $Sx$.
More precisely, since $(Sx) = (x) = R$ and $(Sx)^\mr{t}S(Sx) = x^\mr{t}Sx = 0$, 
there is $A \in \mr{O}_{n}(R)$ such that $Ae_1 = Sx$. 
Then $M = SA$ has the desired properties.
\end{proof}

\bigskip

Let $I$ be an ideal of $R$. 
The canonical projection $\rho_I : R \to R/I$ induces an $R$-module morphism 
$R^n \to (R/I)^n$, an $R$-algebra morphism $\mr{M}_n(R) \to \mr{M}_n(R/I)$, 
and group morphisms 
$\mr{O}_n(R) \to \mr{O}_n(R/I)$
and
$\SO_n(R) \to \SO_n(R/I)$, 
all of which will be denoted by $\rho_I$ or simply by $\rho$, especially in the proofs.
We define
\begin{eqnarray*}
\mr{O}_n(R,I) 
&=& 
\ker(\rho_I : \mr{O}_n(R) \to \mr{O}_n(R/I)),
\\
\mr{SO}_{n}(R,I) 
&=& 
\ker(\rho_I: \mr{SO}_{n}(R)\to \mr{SO}_{n}(R/I)).
\end{eqnarray*}
Observe that
\[[\mr{O}_n(R) : \mr{O}_n(R,I)] = | \rho_I(\mr{O}_n(R))|;\]
moreover, comparing with the notation of Theorem \ref{tindSO}, 
we have 
$\mr{O}_n^m(R)=\mr{O}_n(R, P^m)$ 
and
$\SO_n^m(R)=\SO_n(R, P^m)$. 

\begin{remark}
\label{rr1}
Assume that $R$ is local with residue field of characteristic different from 2,
and $I\neq R$. Then $[\mr{O}_1(R) : \mr{O}_1(R,I)] = 2$; see Lemma \ref{llocalnot2}.
\end{remark}

\begin{lem}\label{SO_n(R) is O_n(R)}
Assume that $R$ is local and with residue field of characteristic different from 2,
and  $I\neq R$.
Then $\mr{SO}_{n}(R,I) = \mr{O}_{n}(R,I)$.
\end{lem}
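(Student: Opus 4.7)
The plan is to show both inclusions, the first being trivial. The inclusion $\mr{SO}_n(R,I)\subseteq \mr{O}_n(R,I)$ follows directly from $\SO_n(R)\subseteq \mr{O}_n(R)$ and the compatibility of the reduction maps.

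For the reverse inclusion, I would start with an arbitrary $M\in \mr{O}_n(R,I)$ and show $\det(M)=1$. Two facts combine. First, since $M\in \mr{O}_n(R)$, we have $\det(M)^2=1$, and Lemma \ref{llocalnot2}(\ref{llocalnot2.1}) gives $\det(M)\in\{1,-1\}$ (this uses that $R$ is local with residue field of characteristic different from $2$, so that $2\in R^*$). Second, since $M\equiv \Id_n\pmod I$, we get $\det(M)\equiv 1\pmod I$.

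The key observation is that $-1\not\equiv 1\pmod I$: indeed, if $-1\equiv 1\pmod I$ then $2\in I$, but $2\in R^*$ since $R$ is local with residue field of characteristic different from $2$, which would force $I=R$, contradicting the hypothesis $I\neq R$. Therefore $\det(M)=1$, so $M\in \SO_n(R)\cap \mr{O}_n(R,I)=\SO_n(R,I)$, as desired.

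There is no real obstacle in this argument; the content is entirely in the interaction between the two facts that $\det(M)=\pm 1$ and $\det(M)\equiv 1 \pmod I$, mediated by the invertibility of $2$ in $R$. All the heavy lifting has been done in Lemma \ref{llocalnot2}.
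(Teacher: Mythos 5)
Your proof is correct and follows essentially the same route as the paper: reduce to $\det(M)\in\{1,-1\}$ via Lemma \ref{llocalnot2}, note $\det(M)\equiv 1 \pmod I$, and conclude from $2\notin I$. The only difference is that you spell out why $2\not\equiv 0 \pmod I$, which the paper leaves implicit.
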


\begin{proof}
The inclusion `$\subseteq$' is trivial. 
Assume that $M \in \mr{O}_{n}(R,I)$. 
Then $\det(M) \in \{1,-1\}$, by Lemma \ref{llocalnot2}, and $\det(M) \equiv 1$ modulo $I$. 
Since $2 \not\equiv 0$ modulo $I$, we have $\det(M) = 1$.
\end{proof} 

\bigskip 

Observe that $\mr{O}_{n}(R)$ acts on $R^{n}$ by left multiplication.
Let 
\[\mr{C}_n(R) = \{x \in R^{n} :\, \exists M\in\mr{O}_n(R)\text{ s.t. }Me_1=x\}\]
be the orbit of $e_1$ under this action.
By Lemma \ref{prop3}, if $n\ges 2$ and $R$ is local then
\[\mr{C}_n(R)=\{x \in R^{n} :\, (x) = R\ \text{and} \ x^\mr{t}Sx = 0\}.\]
For $x \in \mr{C}_n(R)$, we define 
\[T_x = \{M \in \mr{O}_{n}(R) :\, Me_1 = x\}.\]
Sometimes we will use the notation $T_n(R)$ for $T_{e_1}$,
which is  the stabilizer of $e_1$, hence, a subgroup of $\mr{O}_{n}(R)$.
Note that:
\begin{enumerate}
	\item If $A \in \mr{O}_{n}(R)$ and $x \in \mr{C}_n(R)$ then $AT_x = T_{Ax}$;
	\item $\{T_x :\, x \in \mr{C}_n(R)\} = \{AT_{e_1}:\, A \in \mr{O}_{n}(R)\}$, 
	that is, the $T_x$'s are the left cosets of $T_{e_1}$.
\end{enumerate}
 We are going to count $|\rho_I(\mr{O}_{n}(R))|$ through these cosets.

\begin{remark}
\label{rte1n2}	
Assume that, for $b\in R$, $2b =0$ implies $b=0$.
Then $T_2(R) = \{\mr{Id_2}\}$.
\end{remark}

\begin{lem}\label{lcount}
Assume that $n\ges 2$, $R$ is local, and $I\neq R$.
Then the following holds.
\begin{enumerate}
\item 
\label{lcount1}
Let $x \in \mr{C}_n(R)$ be such that $x \equiv e_1$ modulo $I$. 
Then there is some $M \in T_x$ with $M \equiv \Id_n$ modulo $I$.

\item 
\label{lcount2}
For $x,y \in \mr{C}_n(R)$ the following holds.
\begin{enumerate}
	\item \label{lcount2.1}
	Assume that $x \equiv y$  modulo $I$. 
	Then, for every $A \in T_x$, there is some $B \in T_y$ with $B \equiv A$ modulo $I$.
	\item \label{lcount2.2}
	If $\rho_I(x)=\rho_I(y)$ then $\rho_I(T_x) = \rho_I(T_y)$.
	\item \label{lcount2.3}
	If $\rho_I(x)\neq\rho_I(y)$ then $\rho_I(T_x) \cap \rho_I(T_y) = \emptyset$.
\end{enumerate}
\end{enumerate}
\end{lem}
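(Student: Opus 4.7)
The plan is to establish part (\ref{lcount1}) as the main content and then deduce (\ref{lcount2.1})--(\ref{lcount2.3}) by transport-of-structure arguments. For part (\ref{lcount1}), I would build the desired $M\in T_x$ explicitly by feeding suitable data into the construction of Remark \ref{cons}. The key input there is a matrix $U\in\GL_l(R)$ such that $Ue_1=(x_1,\dots,x_l)^\mr{t}$. Since $x\equiv e_1$ modulo $I$ and $I$ lies in the maximal ideal of $R$ (because $I\neq R$ and $R$ is local), we have $x_1\equiv 1$ modulo $I$, so $x_1\in R^*$. I would then take the lower-triangular-type matrix
$$U=\begin{bmatrix} x_1 & 0 & \cdots & 0 \\ x_2 & 1 & \cdots & 0 \\ \vdots & & \ddots & \\ x_l & 0 & \cdots & 1\end{bmatrix},$$
which lies in $\GL_l(R)$ and satisfies $U\equiv \mr{Id}_l$ modulo $I$. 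Unwinding the construction of Remark \ref{cons}, the auxiliary vector $y=U^\mr{t}(x_{l+1},\dots,x_{2l})^\mr{t}$ and the matrix $W$ both reduce to $0$ modulo $I$; in the odd case the entry $x_n\equiv 0$ modulo $I$ takes care of the extra block. Hence the matrix $M$ produced by the remark is congruent to $\mr{Id}_n$ modulo $I$, while $Me_1=x$ by construction.

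For part (\ref{lcount2.1}), given $A\in T_x$ with $x\equiv y$ modulo $I$, I would consider the vector $A^{-1}y$. It lies in $\mr{C}_n(R)$ since $\mr{C}_n(R)$ is stable under the action of $\mr{O}_n(R)$, and it is congruent to $A^{-1}x=e_1$ modulo $I$. Applying part (\ref{lcount1}) to $A^{-1}y$ produces $N\in T_{A^{-1}y}$ with $N\equiv\mr{Id}_n$ modulo $I$, and then $B=AN$ satisfies $Be_1=y$ and $B\equiv A$ modulo $I$. Part (\ref{lcount2.2}) is then immediate from (\ref{lcount2.1}) together with its symmetric counterpart. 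For part (\ref{lcount2.3}) I would argue by contrapositive: if some $A\in T_x$ and $B\in T_y$ satisfy $\rho_I(A)=\rho_I(B)$, then $\rho_I(x)=\rho_I(A)\rho_I(e_1)=\rho_I(B)\rho_I(e_1)=\rho_I(y)$.

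The only real obstacle is part (\ref{lcount1}), and specifically the need to lift $x$ to an orthogonal matrix $M$ while keeping $M$ congruent to the identity. Both constraints are nontrivial simultaneously, and the resolution hinges on the local hypothesis, which turns $x_1\equiv 1$ modulo $I$ into invertibility of $x_1$ and lets the lower-triangular ansatz for $U$ do double duty. Once $U\equiv\mr{Id}_l$ modulo $I$ is secured, the fact that $x\equiv e_1$ modulo $I$ forces all the remaining data $(y,W,x_n)$ in Remark \ref{cons} to vanish modulo $I$, so the desired congruence for $M$ follows automatically from the block formula.
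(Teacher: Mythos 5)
Your proof is correct and follows essentially the same route as the paper: the same lower-triangular choice of $U$ (invertible because $x_1\in 1+I\subseteq R^*$) fed into Remark \ref{cons} to get $M\equiv\Id_n$ modulo $I$, and the same translation trick $B=AN$ with $N\in T_{A^{-1}y}$ for part (\ref{lcount2.1}). The remaining items are handled exactly as in the paper, with your explicit contrapositive for (\ref{lcount2.3}) merely spelling out what the paper calls ``general grounds.''
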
 

\begin{proof}
(\ref{lcount1})
Since $x_1 \in 1+ I \subseteq R^*$, we can take 
\[U = \begin{bmatrix}
x_1 & 0 & \dots & 0\\
\vdots & & \Id_{l-1} & \\
x_l & & & 
\end{bmatrix}
\]
in Remark \ref{cons}. By following the notation of the remark, 
note that $y \equiv 0$ modulo $I$, so that $W$ satisfies 
$W \equiv 0$ modulo $I$ and $M \equiv \Id_n$ modulo $I$.

(\ref{lcount2})
Item (\ref{lcount2.3}) holds on general grounds, while 
(\ref{lcount2.2}) follows from
(\ref{lcount2.1}).
For the proof of (\ref{lcount2.1}),
we fix $A \in T_x$. We have $A^{-1}y \equiv A^{-1}x$ modulo $I$, 
and $A^{-1}x = e_1$. Thus, 
item (\ref{lcount1})
gives $M \in T_{A^{-1}y} = A^{-1}T_y$ such that $M \equiv \Id_n$ modulo $I$. 
Then $B := AM$ has the desired properties.\phantom{aaaa}
\end{proof}

\begin{proposition}
\label{pcount}
Assume that $n\ges 2$, $R$ is local, and $I\neq R$.
Then 
$$|\rho_I(\mr{O}_{n}(R))| = |\rho_I(\mr{C}_n(R))|\cdot|\rho_I(T_n(R))|.$$
\end{proposition}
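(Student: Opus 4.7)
The plan is to decompose $\mr{O}_n(R)$ as the disjoint union of the left cosets $T_x$ ($x \in \mr{C}_n(R)$) of the stabilizer $T_n(R) = T_{e_1}$, then apply $\rho_I$ to this decomposition, and show that (i) the images $\rho_I(T_x)$ partition $\rho_I(\mr{O}_n(R))$ according to the reduction $\rho_I(x)$, and (ii) each $\rho_I(T_x)$ has the same cardinality as $\rho_I(T_n(R))$.

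First, I would record that, by the orbit-stabilizer picture, $\mr{O}_n(R) = \bigsqcup_{x\in\mr{C}_n(R)} T_x$, where each $T_x$ is a left coset of $T_{e_1} = T_n(R)$. Applying $\rho_I$ gives $\rho_I(\mr{O}_n(R)) = \bigcup_{x\in\mr{C}_n(R)} \rho_I(T_x)$. The key input here is Lemma \ref{lcount}(\ref{lcount2}), which guarantees that $\rho_I(T_x)$ depends only on $\rho_I(x)$ (part (\ref{lcount2.2})) and that the images corresponding to distinct reductions are disjoint (part (\ref{lcount2.3})). Therefore, choosing one preimage for each class, we obtain the disjoint union
\[
\rho_I(\mr{O}_n(R)) = \bigsqcup_{\bar x \in \rho_I(\mr{C}_n(R))} \rho_I(T_x).
\]

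Next I would argue that all the sets $\rho_I(T_x)$ have the same cardinality, equal to $|\rho_I(T_n(R))|$. For any $x \in \mr{C}_n(R)$, pick $A \in T_x$; then $T_x = A T_{e_1}$, so $\rho_I(T_x) = \rho_I(A)\,\rho_I(T_n(R))$. Since left multiplication by the element $\rho_I(A)$ in the group $\rho_I(\mr{O}_n(R))$ is a bijection, it restricts to a bijection $\rho_I(T_n(R)) \to \rho_I(T_x)$, giving $|\rho_I(T_x)| = |\rho_I(T_n(R))|$. Combining with the disjoint-union formula yields
\[
|\rho_I(\mr{O}_n(R))| = |\rho_I(\mr{C}_n(R))|\cdot|\rho_I(T_n(R))|,
\]
which is the claim.

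There is essentially no obstacle once Lemma \ref{lcount} is in place: the entire content of the proposition is the fact that $\rho_I$ is compatible with the coset partition, and this is exactly what parts (\ref{lcount2.2}) and (\ref{lcount2.3}) of that lemma provide. The only point to double check is that we are allowed to use the hypotheses $n\ges 2$, $R$ local, $I\neq R$ transparently, but these are already built into Lemma \ref{lcount} and into the description of $\mr{C}_n(R)$ from Lemma \ref{prop3}.
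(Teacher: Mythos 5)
Your proof is correct and follows essentially the same route as the paper: the paper likewise deduces the formula from Lemma \ref{lcount}(\ref{lcount2.2}) and (\ref{lcount2.3}) together with the observation that $|\rho_I(T_x)| = |\rho_I(T_{e_1})|$ for every $x$, which is exactly your left-translation argument. You have simply written out the coset decomposition and the counting in more detail than the paper does.
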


\begin{proof}
Since $|\rho(T_x)| = |\rho(T_{e_1})|$ for every $x \in \mr{C}_n(R)$,
the desired equality follows from Lemma \ref{lcount}, items
(\ref{lcount2.2}) and (\ref{lcount2.3}).
\end{proof}

\begin{lem}\label{lC_n(R)modI}
Assume that $n\ges 2$, $R$ is local with residue field of characteristic different from 2, 
and $I\neq R$. Then
$\rho_I(\mr{C}_n(R)) = \mr{C}_n(R/I)$ and
$|\mr{C}_n(R)| = |\mr{C}_n(R/I)|\cdot|I|^{n-1}$.
\end{lem}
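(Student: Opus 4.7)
The plan is to prove the set equality $\rho_I(\mr{C}_n(R)) = \mr{C}_n(R/I)$ and then show that every fibre of the induced surjection $\mr{C}_n(R) \to \mr{C}_n(R/I)$ has cardinality $|I|^{n-1}$; the cardinality formula then follows by summing over fibres. Throughout I would denote by $\mathfrak{m}$ the maximal ideal of $R$ and use the containment $I \subseteq \mathfrak{m}$ (valid since $I \neq R$ and $R$ is local).

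The inclusion $\rho_I(\mr{C}_n(R)) \subseteq \mr{C}_n(R/I)$ is immediate from the characterisation of $\mr{C}_n$ in Lemma \ref{prop3} applied to the local ring $R/I$, whose residue field coincides with that of $R$ and so has characteristic different from $2$. For the reverse inclusion I would take any set-theoretic lift $x_0 \in R^n$ of $y \in \mr{C}_n(R/I)$; then $(x_0) = R$ holds automatically because some $y_i$ is a unit in $R/I$ and $R$ is local, while $c := x_0^\mr{t}Sx_0$ lies in $I$. The preparatory observation is that some coordinate $(x_0)_{i_0}$ with $i_0 \leq 2l$ must be a unit: for $n=2l$ this is immediate from $(x_0)=R$, and for $n=2l+1$, if every $(x_0)_i$ with $i \leq 2l$ lay in $\mathfrak{m}$, the residue of $c$ modulo $\mathfrak{m}$ would equal $(x_0)_{2l+1}^2$, contradicting either $(x_0)=R$ or $c \in I \subseteq \mathfrak{m}$. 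Because $S$ pairs $i_0$ with some $j_0 \in \{1,\ldots,2l\}$, adding a scalar $t \in I$ at position $j_0$ modifies $c$ by $2(x_0)_{i_0}\,t$, a unit multiple of $t$; the unique choice $t = -c/(2(x_0)_{i_0}) \in I$ then produces the required lift in $\mr{C}_n(R)$.

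For the fibre count I would fix $x_0 \in \mr{C}_n(R)$ lifting $y$ and parametrise the fibre by $z \in I^n$ via $x = x_0+z$. Unimodularity of $x_0+z$ is automatic since $I \subseteq \mathfrak{m}$, and the orthogonality condition reduces to
\[
2(Sx_0)^\mr{t}z + z^\mr{t}Sz = 0.
\]
The same parity observation applied to $x_0 \in \mr{C}_n(R)$ supplies an index $j \in \{1,\ldots,2l\}$ with $(Sx_0)_j$ a unit. The coefficient of $z_j$ in the displayed equation is $2(Sx_0)_j + 2z_{j'}$, where $j'$ is the index $S$-paired with $j$; this stays a unit because $z_{j'} \in I \subseteq \mathfrak{m}$. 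I would then isolate $z_j$ and verify that for any choice of $(z_k)_{k\neq j} \in I^{n-1}$ there is a unique solution $z_j$, which automatically lies in $I$ because the right-hand side of the isolated equation visibly does. This yields a bijection between each fibre and $I^{n-1}$, whence $|\mr{C}_n(R)| = |\mr{C}_n(R/I)|\cdot |I|^{n-1}$.

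The step I expect to be most delicate is the parity-sensitive choice of pivot index $j \leq 2l$ in the odd case, which must be carried out both during the lifting argument and again during the fibre-counting step; the remaining manipulations are routine consequences of $2 \in R^*$ and of $R$ being local, converting a one-variable equation in $z_j$ into a unique solution inside $\mathfrak{m}$.
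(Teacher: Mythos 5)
Your proof is correct and follows essentially the same route as the paper: both arguments hinge on locating a unit coordinate among the first $2l$ entries (with the same parity reasoning in the odd case) and on the fact that the quadratic form is affine-linear with a unit coefficient in the $S$-paired coordinate, so that coordinate is uniquely determined by the others. The only difference is organizational: the paper proves surjectivity and the fibre count in one stroke by exhibiting a bijection between each fibre and $I^{n-1}$ (choosing all coordinates except the paired one freely), whereas you first produce an explicit lift and then parametrize the fibre around it.
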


\begin{proof}
The inclusion `$\subseteq$' is clear. 
Let $x \in \mr{C}_n(R/I)$. The rest of the argument
will simultaneously prove that $x\in \rho(C_n(R))$ and the stated equality of cardinalities.
Since $R/I$ is local, and by the same argument as in the proof of Lemma \ref{prop3},
there is some $1 \leq i_0 \leq 2l$ such that 
$x_{i_0} \in (R/I)^*$. 
Moreover, $\rho^{-1}(x_{i_0})\subseteq R^*$ because $R$ is local. 
So if $y \in R^n$ then
\[y \in \mr{C}_n(R) \ \text{and} \ \rho(y) = x \iff \rho(y) = x \ \text{and} \ y^\mr{t}Sy = 0.\]
Now, define 
\[i_1 = \begin{cases}
i_0 + l & \text{if }1\leq i_0 \leq l,\\
i_0 - l & \text{if }l < i_0 \leq 2l,
\end{cases}\]
thereby $2y_{i_0}y_{i_1}$ is the unique summand of $y^\mr{t}Sy$ that depends on $y_{i_0}$. 
Then 
\[\rho(y) = x \ \text{and} \ y^\mr{t}Sy = 0 
\iff 
\rho(y_i) = x_i\ \text{for} \ i \neq i_1 \ \text{and} \ y^\mr{t}Sy = 0,\] 
because, since $2x_{i_0} \in (R/I)^*$, 
$x^\mr{t}Sx = 0$ has only one solution for $x_{i_1}$ in terms of $(x_i)	_{i \neq i_1}$.
Similarly, if $y_{i_0} \in R^*$ then $y^\mr{t}Sy = 0$ 
has an unique solution for $y_{i_1}$ in terms of $(y_i)_{i \neq i_1}$.
Therefore, 
whenever $x \in \mr{C}_n(R/I)$,
the set $\{y \in \mr{C}_n(R) :\, \rho(y) = x\}$ is in bijection 
with the Cartesian power $I^{n-1}$;
in particular, it is nonempty.
\end{proof}

\begin{remark}
\label{rprinc}
Let $\pi\in R$ and $m, j\in\bb{N}$ with $j\les m$.
Consider the chain of ideals 
$R\pi^{m}\subseteq R\pi^{m-1} \subseteq \dots \subseteq R\pi\subseteq R$.
Then
$$
|R\pi^j/R\pi^{m}| = \prod_{i=j}^{m-1}|R\pi^{i}/R\pi^{i+1}|.
$$

Assume that $\pi$ is not a zero divisor,
that is, for $a\in R$, $\pi a=0$ implies $a=0$.
Then $\pi^i$ is not a zero divisor for all $i\in\bb{N}$, and
multiplication by $\pi^i$
induces an $R$-module isomorphism $R/R\pi\rar R\pi^i/R\pi^{i+1}$.
Hence,
$$
|R\pi^j/R\pi^{m}| = |R/R\pi|^{m-j}.
$$
\end{remark}

\begin{proposition} \label{formula |C_n(R/I)|}
Under the assumptions of Theorem \ref{tindSO}
and the additional assumption $n\ges 2$
we have  
\[|\mr{C}_n(R/P^m)| = 
\begin{cases}
(q^l - 1)(q^{l-1}+1)q^{(m-1)(n-1)} & \text{if } n = 2l,\\
(q^{n-1} - 1)q^{(m-1)(n-1)} & \text{if } n = 2l+1.
\end{cases}\]
\end{proposition}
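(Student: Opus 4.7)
The plan is to reduce the computation to the residue-field case via Lemma \ref{lC_n(R)modI}, and then to count nonzero isotropic vectors over $\F_q$ by a short case split.

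First I apply Lemma \ref{lC_n(R)modI} to the local ring $\tilde R := R/P^m$ equipped with its proper ideal $\tilde P := P/P^m$. The residue field $\tilde R/\tilde P$ is canonically isomorphic to $R/P = \F_q$, which has characteristic different from $2$, so the hypotheses hold. By Remark \ref{rprinc}, applied to the chain $P^m\subseteq P^{m-1}\subseteq\dots\subseteq P$ and using that $\pi$ is not a zero divisor, $|P/P^m| = q^{m-1}$. The lemma then yields
$$|\mr{C}_n(R/P^m)| = |\mr{C}_n(\F_q)|\cdot q^{(m-1)(n-1)},$$
so the problem is reduced to computing $|\mr{C}_n(\F_q)|$.

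Over the field $\F_q$, the condition $(x)=\F_q$ simply says $x\neq 0$, so $|\mr{C}_n(\F_q)|$ counts nonzero solutions of $x^{\mr{t}}Sx=0$. In both cases I would split on whether the first $l$ coordinates $x' := (x_1,\dots,x_l)$ vanish. For $n=2l$ the equation is $2\langle x',x''\rangle = 0$: if $x'=0$ then $x''$ is arbitrary, giving $q^l$ vectors; if $x'\neq 0$ the equation is a nonzero linear equation in $x''$, giving $q^{l-1}$ solutions for each of the $q^l-1$ choices of $x'$. Subtracting $1$ and factoring produces $(q^l-1)(q^{l-1}+1)$. For $n=2l+1$ the equation is $2\langle x',x''\rangle + x_n^2 = 0$: if $x'=0$ then $x_n=0$ is forced and any $x''$ works, contributing $q^l$; if $x'\neq 0$, then for each $x_n\in\F_q$ the equation is a nontrivial linear condition in $x''$, contributing $(q^l-1)\cdot q\cdot q^{l-1}$. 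Subtracting $1$ gives $q^{2l}-1 = q^{n-1}-1$.

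There is no real obstacle beyond setting up these two steps cleanly. The only points to verify carefully are that the hypotheses of Lemma \ref{lC_n(R)modI} are genuinely met in the reduction step (in particular $P/P^m\neq R/P^m$, which is automatic since $1\notin P$; the case $m=1$ simply makes the lemma tautological), and that the defining equation of $\mr{C}_n(\F_q)$ really is a nontrivial linear equation in $x''$ whenever $x'\neq 0$, which is immediate because $2\in\F_q^*$. Multiplying the resulting counts by $q^{(m-1)(n-1)}$ delivers the two stated formulas.
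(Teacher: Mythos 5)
Your proposal is correct, and its overall architecture coincides with the paper's: both reduce to the residue field by applying Lemma \ref{lC_n(R)modI} to the local ring $R/P^m$ with maximal ideal $P/P^m$, and both use Remark \ref{rprinc} to get $|P/P^m|=q^{m-1}$, leaving only the count of $|\mr{C}_n(\F_q)|$. The one place you diverge is in that final count. The paper sets $c_n=|\{x\in\F_q^n:\,x^{\mr{t}}Sx=0\}|$ and runs a recursion in $n$, deleting the $l$-th and $2l$-th coordinates so that $x^{\mr{t}}Sx=\tilde{x}^{\mr{t}}S\tilde{x}+2x_lx_{2l}$ and obtaining $c_n=c_{n-2}q+q^{n-2}(q-1)$ with $c_1=1$, $c_2=2q-1$. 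You instead count in one step by conditioning on whether $x'=(x_1,\dots,x_l)$ vanishes, using that for $x'\neq 0$ the equation becomes a nontrivial affine condition on $x''$ with exactly $q^{l-1}$ solutions (and, in the odd case, that $x'=0$ forces $x_n=0$). Both computations yield $c_{2l}=q^{l-1}(q^l+q-1)$ and $c_{2l+1}=q^{2l}$, hence the stated formulas after subtracting $1$ for the zero vector; your version is marginally more direct, the paper's recursion is more uniform across the two parities. No gaps.
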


\begin{proof}
First, we treat the case $m=1$, where $R/P^m= R/P \simeq \F_q$. 
For $x \in \F_q^n$, the condition $(x) = \bb{F}_q$ 
is equivalent to $x \neq 0$, thus 
\[|\mr{C}_n(\F_q)| = |\{x \in \F_q^n :\, x^\mr{t}Sx = 0\}| - 1.\]
Let $c_n = |\{x \in \F_q^n :\, x^\mr{t}Sx = 0\}|$. 
We calculate $c_n$ by recursion. 
For $n \geq 3$ and $x \in \F_q^n$, 
let $\tilde{x} \in \F_q^{n-2}$ be $x$ with the $l$-th and $2l$-th entries omitted. 
So $x^\mr{t}Sx = \tilde{x}^\mr{t}S\tilde{x} + 2x_lx_{2l}$. Since $2 \in \F_q^*$, 
if we fix $\tilde{x}$ such that $\tilde{x}^\mr{t}S\tilde{x} = 0$, 
there are $2q-1$ possible choices for $(x_l,x_{2l})$ such that $x^\mr{t}Sx = 0$. 
On the other hand, if $\tilde{x}^\mr{t}S\tilde{x} \neq 0$, then there are $q-1$ possible values for $(x_l,x_{2l})$. 
Therefore, for $n \geq 3$ we have
\begin{align*}
	c_n &= c_{n-2}(2q - 1) + (q^{n-2} - c_{n-2})(q-1)\\
	&= c_{n-2}q + q^{n-2}(q-1).
\end{align*}
Also, $c_2 = 2q-1$ and $c_1 = 1$, so we obtain
\[c_n = 
\begin{cases}
q^{l-1}(q^l + q - 1) & \text{if } n=2l,\\
q^{n-1} & \text{if } n=2l+1.
\end{cases}\]
Summarizing, we have
\[|\mr{C}_n(R/P)| = c_n - 1 =
\begin{cases}
(q^l-1)(q^{l-1}+1) & \text{if } n=2l,\\
q^{n-1} - 1 & \text{if } n=2l+1.
\end{cases}\]

Now, we treat the general case $m\ges 1$.
Note that $R/P^m$ is a local ring with residue field isomorphic to $R/P$,
so we can apply Lemma \ref{lC_n(R)modI} to $R/P^m$
and its maximal ideal $P/P^m$, obtaining
\[|\mr{C}_n(R/P^m)| = |\mr{C}_n(R/P)|\cdot|P/P^m|^{n-1}.\]
By Remark \ref{rprinc}, $|P/P^{m}|=q^{m-1}$, thus, the value of
$|\mr{C}_n(R/P^m)|$ is as in the statement. 
\end{proof}

\begin{lem}\label{l|T_e_1|}
Assume that $n \geq 2$ and $2\in R^*$. 
Then 
$|\rho_I(T_n(R))| = |\rho_I(\mr{O}_{n-2}(R))|\cdot |R/I|^{n-2}$.
(We define $\mr{O}_0(R)$ to be a trivial group, for all $R$). 
\end{lem}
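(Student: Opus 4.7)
The plan is to explicitly parametrize $T_n(R)$ as $\mr{O}_{n-2}(R) \times R^{n-2}$ via polynomial maps defined over $R$, and then use the compatibility of this parametrization with $\rho_I$ to conclude.

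The first step is to pin down the shape of $M \in T_n(R)$. The condition $Me_1 = e_1$ says the first column of $M$ is $e_1$. Since $M \in \mr{O}_n(R)$ and $S^2 = \Id_n$, we have $M^{-1} = SM^\mr{t}S$, and applying both sides to $e_1$ (using that $Se_1 = e_{l+1}$ in both the even and odd cases) yields $M^\mr{t} e_{l+1} = e_{l+1}$. Hence the $(l+1)$-th row of $M$ equals $e_{l+1}^\mr{t}$. In the $l \times l$ block form of Lemma \ref{conditions for orthogonal matrixes}(1)(a) (for $n=2l$), or the $(l,l,1)$ form of (2)(a) (for $n=2l+1$), this forces the top-left and bottom-right $l \times l$ blocks $\mathbf{A}, \mathbf{D}$ to have first column/first row structure $\bigl[\begin{smallmatrix} 1 & b \\ 0 & A\end{smallmatrix}\bigr]$ and $\bigl[\begin{smallmatrix} 1 & 0 \\ q & D\end{smallmatrix}\bigr]$, the bottom-left block $\mathbf{C}$ to have a zero first row and column, and (for $n=2l+1$) the first entry of $\mathbf{v}$ and the first entry of $\mathbf{x}$ to vanish.

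The second step is to decode the orthogonality conditions of Lemma \ref{conditions for orthogonal matrixes} in this refined decomposition. A direct block computation shows that the scalar $\alpha$ (the upper-right pivot entry), the row $b$ and, in the odd case, the row $c$, the scalar $\mu$ (top entry of the last column) are uniquely determined by the remaining block data via explicit polynomial formulas; solving the $(1,1)$-entry equation in the $\mathbf{B}^\mr{t}\mathbf{D} + \mathbf{D}^\mr{t}\mathbf{B}(+\mathbf{y}^\mr{t}\mathbf{y})=0$ relation for $\alpha$ is where $2 \in R^*$ enters crucially. Once these forced entries are eliminated, the remaining conditions on the $(l-1)\times(l-1)$ sub-blocks $A,B,C,D$ (together, in the odd case, with the truncated rows $x,y$, the columns $u,v$, and the scalar $\omega$) become exactly the defining conditions of $\mr{O}_{n-2}(R)$ via Lemma \ref{conditions for orthogonal matrixes} again. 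The only truly free parameters left are the columns $p,q \in R^{l-1}$ in the even case, and $p,q \in R^{l-1}$ together with $\delta \in R$ in the odd case, giving a total of $n-2$ free scalar entries.

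This produces a bijection $\Phi \colon T_n(R) \to \mr{O}_{n-2}(R) \times R^{n-2}$ with both $\Phi$ and $\Phi^{-1}$ given by $R$-polynomial maps. Because polynomial maps commute with $\rho_I$, the analogous bijection holds after reduction: $\Phi$ induces a bijection between $\rho_I(T_n(R))$ and $\rho_I(\mr{O}_{n-2}(R)) \times (R/I)^{n-2}$, where surjectivity of $R^{n-2} \to (R/I)^{n-2}$ gives the second factor directly. Taking cardinalities yields the claimed formula, with the convention that $\mr{O}_0(R)$ is trivial handling the base case $n=2$ (cf.\ Remark \ref{rte1n2}).

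The main obstacle is really just the clean bookkeeping in the odd case, where the additional row, column and corner scalar $\omega^2$ have to be correctly matched against the condition list of Lemma \ref{conditions for orthogonal matrixes}(2)(a) so that the data $(A,B,C,D,x,y,u,v,\omega)$ indeed assembles into an element of $\mr{O}_{2(l-1)+1}(R)$; the hypothesis $2 \in R^*$ is what makes the elimination of the forced entries (in particular of $\alpha$) well defined.
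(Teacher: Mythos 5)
Your proposal is correct and follows essentially the same route as the paper: both parametrize $T_n(R)$ by $\mr{O}_{n-2}(R)\times R^{n-2}$ via the block conditions of Lemma \ref{conditions for orthogonal matrixes} (with $2\in R^*$ used to solve for the forced pivot entry) and then pass to $R/I$ through the resulting commutative square of polynomial maps. The only cosmetic difference is that you derive the constraint on the $(l+1)$-th row from $M^{-1}=SM^{\mr{t}}S$ rather than reading it off the explicit orthogonality equations.
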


\begin{proof}
Let $M \in \mr{M}_n(R)$ be such that $Me_1 = e_1$. 
Notationally, we treat the case where $n$ is odd,
and we write
\[
M = 
\begin{bmatrix}
1 & a & \lambda & b & \alpha\\
0 & A & x & B & u\\
0 & c & \mu & d & \beta\\
0 & C & y & D & v\\
0 & e & \nu & f & \gamma
\end{bmatrix},
\]
where the matrices on the diagonal have sides 1, $l-1$, 1, $l-1$ and 1, respectively. 
For even $n$, one has to neglect the last row and column, and their parameters.
For $n=2,3$, one (also) neglects the second and fourth rows and columns together with their parameters.
By Lemma \ref{conditions for orthogonal matrixes},
$M \in T_{e_1}$ if and only if
\begin{enumerate}
	\item $\mu = 1$, $\beta = 0$ and $c = d = 0$,
	\item $a = -y^\mr{t}A - x^\mr{t}C - \nu e$,
	\item $2\lambda = -y^\mr{t}x - x^\mr{t}y - \nu^2$,
	\item $b = -y^\mr{t}B - x^\mr{t}D - \nu f$,
	\item $\alpha = -y^\mr{t}u - x^\mr{t}v -\gamma\nu$, and
	\item $\begin{bmatrix}
	A & B & u\\
	C & D & v\\
	e & f & \gamma
	\end{bmatrix} \in \mr{O}_{n-2}(R)$.
\end{enumerate}
Hence,  we have exactly the freedom to choose $x,y \in R^{l-1}$, 
$\begin{bmatrix}
	A & B & u\\
	C & D & v\\
	e & f & \gamma
	\end{bmatrix} \in \mr{O}_{n-2}(R)$,
and $\nu\in R$ (as indicated above, the last condition has to be neglected for
even $n$).   
Observe that $2\in (R/I)^*$, so that the above arguments apply to $R/I$ as well. 
Consider the diagram
$$
\xymatrix{
\mr{O}_{n-2}(R)\times R^{n-2} 
\ar[r]\ar[d]
&
\mr{O}_{n-2}(R/I)\times (R/I)^{n-2} 
\ar[d]
\\
T_n(R) 
\ar[r]
&
T_n(R/I),
}
$$
where the horizontal maps are induced by reduction modulo $I$,
and the vertical maps are the bijections induced by the arguments above.
Since the diagram commutes and the image of the upper horizontal map is 
$\rho(\mr{O}_{n-2}(R))\times (R/I)^{n-2}$,
the  equality of cardinalities claimed in the statement follows.
\end{proof}

\bigskip 

\noindent
\textbf{Proof of Theorem \ref{tindSO}.}
For the nonspecial orthogonal groups the proof relies on 
Proposition \ref{pcount}, 
Lemma \ref{lC_n(R)modI}, 
Proposition \ref{formula |C_n(R/I)|}, 
Lemma \ref{l|T_e_1|},
Remark \ref{rprinc},
Remark \ref{rr1},
and Remark \ref{rte1n2};
see the details below.
The results for the special orthogonal groups
follow from the ones for the orthogonal groups
together with
Lemma \ref{llocalnot2}(\ref{llocalnot2.2})
and 
Lemma \ref{SO_n(R) is O_n(R)}.
In what follows we denote $[\mr{O}_{n}(R) : \mr{O}_{n}(R,P^m)]= |\rho(\mr{O}_n(R))|$
by $r_{n,m}$. 
 
We first treat the case where $n = 2l$.
If $n \geq 4$ we have
\begin{align*}
r_{n, m} &= (q^l - 1)(q^{l-1}+1)q^{(m-1)(n-1)}q^{m(n-2)}r_{n-2, m}\\
&= \left(1 - \frac{1}{q^l}\right)\left(1+\frac{1}{q^{l-1}}\right)q^{m(2n-3)}r_{n-2, m}\\
&= \left(1 - \frac{1}{q^l}\right)\left(1+\frac{1}{q^{l-1}}\right)q^{m(4l-3)}r_{n-2, m}.
\end{align*}
Since 
\[r_{2, m} = |\mr{C}_2(R/P^m)| = 2(q-1)q^{m-1} = 2q^m\left(1 - \frac{1}{q}\right),\]
we have
\begin{align*}
r_{n, m} &= 2q^m\left(1 - \frac{1}{q}\right)\prod_{i=2}^{l}\left[\left(1 - \frac{1}{q^i}\right)\left(1+\frac{1}{q^{i-1}}\right)q^{m(4i-3)}\right]\\
&= 2q^{m(2l^2-l)}\left(1 - \frac{1}{q^l}\right)\prod_{i=1}^{l-1}\left[\left(1 - \frac{1}{q^i}\right)\left(1+\frac{1}{q^{i}}\right)\right]\\
&= 2q^{m(2l^2-l)}\left(1 - \frac{1}{q^l}\right)\prod_{i=1}^{l-1}\left(1 - \frac{1}{q^{2i}}\right).
\end{align*}

For the case $n = 2l+1$,
we take $n \geq 3$ and compute
\begin{align*}
r_{n,m} &= (q^{n-1} - 1)q^{(m-1)(n-1)}q^{m(n-2)}r_{n-2,m}\\
&= \left(1 - \frac{1}{q^{n-1}}\right)q^{m(2n-3)}r_{n-2,m}\\
&= \left(1 - \frac{1}{q^{2l}}\right)q^{m(4l-1)}r_{n-2,m}.
\end{align*}
Since $r_{1,m} = 2$,
\begin{flalign*}
&&r_{n,m} &= 2\prod_{i=1}^{l}\left[\left(1 - \frac{1}{q^{2i}}\right)q^{m(4i-1)}\right]\\
&&&= 2q^{m(2l^2+l)}\prod_{i=1}^{l}\left(1 - \frac{1}{q^{2i}}\right).&&\square 
\end{flalign*}


\begin{footnotesize}

\providecommand{\bysame}{\leavevmode\hbox to3em{\hrulefill}\thinspace}
\providecommand{\MR}{\relax\ifhmode\unskip\space\fi MR }
\providecommand{\MRhref}[2]{%
  \href{http://www.ams.org/mathscinet-getitem?mr=#1}{#2}
}
\providecommand{\href}[2]{#2}

\end{footnotesize}

\vspace{5mm}

\noindent
Karina Livramento\\
Mathematics Institute\\
Federal University of Rio de Janeiro\\
Av. Athos da Silveira Ramos, 149\\
21941-909, Rio de Janeiro, RJ, Brazil  \\
{\tt karina@im.ufrj.br}\\

\noindent
Francesco Noseda\\
Mathematics Institute\\
Federal University of Rio de Janeiro\\
Av. Athos da Silveira Ramos, 149\\
21941-909, Rio de Janeiro, RJ, Brazil  \\
{\tt noseda@im.ufrj.br}\\

\end{document}